\newtheorem{remark}{Remark}
\newtheorem{theorem}{Theorem}
\DeclareMathOperator\supp{supp}
\DeclareMathOperator\dom{dom}
\DeclareMathOperator\argmin{arg\,min}
\DeclareMathOperator\minimize{minimize}
\newtheorem{assumption}{Assumption}
\title{Online Stochastic DCA with applications to Principal Component Analysis \thanks{This work has been submitted to the IEEE for possible publication. Copyright may be transferred without notice, after which this version may no longer be accessible.}}
\author{ Hoai An Le Thi\\
	Universit\'e de Lorraine, LGIPM\\
	F-57000 Metz, France \\
	\texttt{hoai-an.le-thi@univ-lorraine.fr} \\
	\And
	Hoang Phuc Hau Luu \\
	Universit\'e de Lorraine, LGIPM\\
	F-57000 Metz, France \\
	\texttt{hoang-phuc-hau.luu@univ-lorraine.fr} \\
	\AND
	Tao Pham Dinh\\
	Laboratory of Mathematics, INSA-Rouen \\
	University of Normandie \\
	76801 Saint-\'Etienne-du-Rouvray Cedex, France\\
	\texttt{pham@insa-rouen.fr}
}
\date{}
\begin{document}
\maketitle 

\begin{abstract}
Stochastic algorithms are well-known for their performance in the era of big data. In convex optimization, stochastic algorithms have been studied in depth and breadth. However, the current body of research on stochastic algorithms for nonsmooth, nonconvex optimization is relatively limited. In this paper, we propose new stochastic algorithms based on DC (Difference of Convex functions) programming and DCA (DC Algorithm) - the backbone of nonconvex, nonsmooth optimization. Since most real-world nonconvex programs fall into the framework of DC programming, our proposed methods can be employed in various situations, in which they confront stochastic nature and nonconvexity simultaneously. The convergence analysis of the proposed algorithms is studied intensively with the help of tools from modern convex analysis and martingale theory. Finally, we study several aspects of the proposed algorithms on an important problem in machine learning: the expected problem in Principal Component Analysis.
\end{abstract}

\keywords{DC programming, DCA, nonconvex optimization, online stochastic DCA, Principal Component Analysis}

\section{Introduction}
\label{submission}
We consider the following optimization problem
\begin{equation}
\label{eq:ori}
\min_{w \in S}\{F(w) = \mathbb{E}(g(w,Z)) - \mathbb{E}(h(w,Z))\},
\end{equation}
where $S \subset \mathbb{R}^m$ is a nonempty, compact, and convex set, $Z$ is a random vector determined in some complete probability space $(\Omega, \mathcal{M}, \mathbb{P})$ such that $Z: \Omega \to \mathbb{R}^n$ and $g, h$ are functions satisfying some conditions described later. Broadly, $g$ and $h$ are those that make $G(w) = \mathbb{E}(g(w,Z))$ and $H(w) = \mathbb{E}(h(w,Z))$ convex, lower semi-continuous.

The framework of the problem (\ref{eq:ori}) is very general in two aspects. Firstly, the underlying distribution of $Z$ is arbitrary, which makes it able to treat any random variable involved. As a special case, when $Z$ is uniformly distributed over a finite set, we obtain a large-sum problem,
\begin{align}
\label{eq:empi}
\min_{w \in S} \left \{F(w) = \dfrac{1}{N}\sum_{i=1}^N{g(w,z_i)} - \dfrac{1}{N}\sum_{i=1}^N{h(w,z_i)} \right\}.
\end{align}
Secondly, in our setting, $g$ and $h$ are allowed to be nonsmooth, resulting in a very large class of stochastic nonsmooth, nonconvex DC programs which comprises most real-world problems \cite{lethi18}. Various learning problems possess DC structures, here we name a few: robust learning \cite{xu2020learning,collobert2006trading}, robust phase retrieval \cite{davis2019stochastic}, Positive Unlabeled (PU) learning with convex loss \cite{kiryo2017positive}, Difference of Log-sum-exp neural networks \cite{calafiore2020universal,bruggemann2020use}, principal component analysis \cite{montanari15}, etc.

Having said that, the main challenge of the problem (\ref{eq:ori}) also comes from the nonconvex structure of $F$ and the unknown underlying distribution of $Z$. So far, there is very few algorithms for stochastic nonconvex and nonsmooth problems of the general setting (\ref{eq:ori}).

In the literature, stochastic optimization has been investigated thoroughly for convex problems since the seminal work \cite{robbins51}. In this work, the authors introduced a novel idea of using stochastic approximations (SA) that results in Stochastic Gradient Descent (SGD). Thanks to its inexpensive computation cost, the SGD really opened a door in numerical optimization for large-scale problems \cite{bottou18,bottou10}. Hitherto, many variants of the SGD have been studied including stochastic subgradient descent \cite{ermoliev83,ruszczynski1986convergence}, incorporating Nesterov's acceleration technique \cite{ghadimi12}, using second-order information \cite{bottou04,bottou08,byrd16}. In nonconvex optimization, stochastic algorithms remain rare. Most of them require the objective to be smooth or partially smooth (some components of the objective are smooth). We list here some main approach to tackle nonconvex stochastic problems. Inspired by the aforementioned SGD, the first approach is stochastic (proximal) (sub)gradient-based methods which are mainly developed for smooth or weakly convex objective functions \cite{ghadimi2013stochastic,bertsekas2000gradient,davis2019stochastic}. In this approach, a gradient-like update is performed at each iteration where the proximal operator can be employed. The second is stochastic MM (Majorization-Minimization) for partially smooth objective \cite{mairal2013stochastic,razaviyayn2016stochastic}, in which the stochastic convex surrogate is constructed at each iteration and is minimized to obtain an updated optimization variable. The third is stochastic Successive Convex Approximation \cite{scutari2013decomposition,Yang16} (mainly for smooth objective functions) that is similar to stochastic MM where the sequence of approximation functions are convex but need not be the upper bound of sample objective functions. The fourth is stochastic DCA that aims to deal with stochastic DC programs - a substantially large class to cover almost all real-world nonconvex optimization problems \cite{lethi18}. Initial works in this approach include \cite{le2017stochastic,le2020stochastic,liu2020two,nitanda2017stochastic,xu19} that consider some special classes of DC problems such as large-sum and/or (partially) smooth, as well as \cite{an2020stochastic} working on a very general class of stochastic nonsmooth DC programs. To extend beyond the DC programming framework, \cite{Metel19} used Moreau envelope which is a DC function to approximate a nonsmooth, nonconvex regularizer, and then developed a stochastic DCA for solving the resulting problem. It is worth mentioning that in \cite{mairal2013stochastic,razaviyayn2016stochastic}, the authors also consider DC surrogates whose the second DC component is differentiable. It should be further noted that, as indicated in \cite{lethi18}, while the (stochastic) MM proposes a general idea to majorize the objective function, (stochastic) DCA gives the simplest and the most closed convex surrogate thanks to DC structures of the objective. Futhermore, usual choices of surrogates of MM result in DCA versions \cite{lethi18}.

In deterministic optimization context, DC  programming and DCA  
constitute a quite logical and natural extension of modern convex
analysis/programming to nonsmooth nonconvex analysis/programming,
sufficiently large to cover most real-world nonsmooth nonconvex programs,
but not too broad in order to explore/exploit the powerful arsenal of convex
analysis/programming. This theoretical and algorithmic philosophy was first
introduced in 1985 by Pham Dinh Tao, and widely developed by Le Thi Hoai An
and Pham Dinh Tao since 1993 to become now classic and increasingly popular
(see \cite{lethi05,phamdinh97,PLT98,PLT14}   and a comprehensible review on thirty years of developments
of DC  programming and DCA  in \cite{lethi18}).
It is widely recognized that DCA is one of rare algorithms to efficiently solve large-scale nonconvex and nonsmooth programs \cite{lethi18}. Thanks to the pervasiveness of DC programming and the flexible principle of DC reformulations, DCA   recover almost all standard methods in convex and nonconvex programming. Also, the flexibility and simplicity of DCA make the method a powerful tool to be employed in various applications in applied sciences including transport logistic, finance, computational biology, computational chemistry, robotics, data mining and machine learning, image processing and computer vision, cryptology, inverse problems and ill-posed problems, etc., see e.g., \cite{hoai2000efficient,Lethi:chap:01b,an2003large,lethi05,Lethi:adac:08,Lethi:jfcst:09,le2011solving,letmpj12,le2015dc,LeThi2016,pang2017computing,phamdinh97,lethi18} and the list of references in \cite{lethi18}.

To our knowledge, the paper \cite{an2020stochastic} is the first work dealing with the general setting (\ref{eq:ori}) where both DC components are allowed to be nonsmooth. In that article, the authors proposed several stochastic DCA schemes in the aggregated update style. That is, all past information (sample realizations) is used to construct subproblems. These algorithms therefore need to store all samples in the computer memory during the computational process. In this work, we investigate online stochastic DCA for the general problem (\ref{eq:ori}) to deal with fast  streaming data where we do not need to store samples all the time. Furthermore, thanks to the online mechanism, our proposed algorithms have the adaptive ability which is a great advantage over the SDCA schemes proposed in \cite{an2020stochastic}. Numerical experiments will justify this claim.

\emph{Paper's contribution.} 
We design three new online stochastic DCA schemes for solving the generic problem (\ref{eq:ori}) (which will be described in more details in section \ref{sec:osDCA}). The problem is very general in such a way that both DC components are nonsmooth. Besides, we will see that the assumptions used are mild that make the considered problem cover a very large class of real-world applications. Since the update steps of the proposed algorithms require \emph{new fresh samples} from the distribution of $Z$, we refer to our algorithms as \emph{online stochastic DCA} (osDCA in short). The first osDCA scheme constructs stochastic approximations (SA) for both values of $G$ and subgradients of $H$. The convergence analysis of the proposed algorithm is rigorously studied. It turns out that  the subsequential convergence to critical points with probability one is guaranteed. Although we only consider the same random vector inside both DC components for simplicity of presentation, the proposed algorithm and its convergence analysis can be extended to a more general setting which is $F(w) = \mathbb{E}(g(w,Z))-\mathbb{E}(h(w,\tilde{Z}))$, where $Z$ and $\tilde{Z}$ are two different random vectors. The extension aims to handle optimization problems involving with two parallel streams of data. Next, in the second and the third algorithms, we consider two scenarios where the values of $G$ and the subgradients of $H$ can be directly computed, respectively. The subsequential convergence to DC critical points is also established with milder assumptions than those of the first osDCA scheme. In three proposed algorithms, we require the number of samples used at each iteration to increase at a certain rate. This rate in the latter two algorithms is better than the first one. In addition, in the second scheme, this rate can be specified in advance without the knowledge on the complexity of a family of functions associated with $g$. The proposed osDCA schemes enjoy a double benefit of an  online algorithms: they are suitable to perform {\it streaming data} which come from an {\it unknown distribution}. Moreover, we discuss several contexts where one can formulate some classes of stochastic as well as deterministic programs into the form of (\ref{eq:ori}).

Finally, based on our proposed algorithms, we design two specific schemes for solving the expected problem of principal component analysis. Numerical experiments have been conducted carefully to study the proposed algorithms' behaviors in different aspects.

\section{Preliminaries}
\subsection{Outline of DC programming and DCA}
In this subsection, we briefly introduce DC programming and DCA. Let $\Gamma_0(\mathbb{R}^n)$ denote the convex cone of all lower semicontinuous proper convex functions on $\mathbb{R}^n$. The standard DC program takes the form
\begin{equation*}
\alpha := \inf\{f(x)= g(x) - h(x): x \in \mathbb{R}^n\} \quad (P_{dc}),
\end{equation*}
where $g,h \in \Gamma_0(\mathbb{R}^n)$. Such a function $f$ is called DC, $g-h$ is DC decomposition, while $g$ and $h$ are DC components of $f$. Note that, a DC program with closed convex constraint $x \in C$ can be equivalently written as a standard DC program in such a way that $f = (g+\chi_C) - h$, where $\chi_C$ is the indicator function of $C$.

For a convex function $\theta$ defined on $\mathbb{R}^n$ and a convex set $C$, the modulus of strong convexity of $\theta$ on $C$, denoted by $\rho(\theta,C)$ or $\rho(\theta)$ if $C = \mathbb{R}^n$, is given by
\begin{equation*}
\rho(\theta, C) = \sup\{\mu \geq 0: \theta - (\mu/2) \Vert \cdot \Vert^2 \text{ is convex on }C\}.
\end{equation*}
Moreover, a function $\theta$ is said to be strongly convex on $C$ if $\rho(\theta,C) >0$. The subdifferential of $\theta$ at $x_0 \in \dom \theta$, denoted by $\partial \theta(x_0)$, is defined by
\begin{equation*}
\partial \theta (x_0)=\{y \in \mathbb{R}^n: \theta(x) \geq \theta(x_0) + \langle x -x_0,y \rangle, \forall x \in \mathbb{R}^n\}.
\end{equation*}
The conjugate function $\theta^{\ast}$ of $\theta \in \Gamma_0(\mathbb{R}^n)$ is defined by
$\theta^{\ast}(y)=\sup\{\langle x,y \rangle-\theta(x) : x \in \mathbb{R}^n\}.$

A point $x^*$ is called a critical point, or a generalized Karush-Kuhn-Tucker (KKT) point of $(P_{dc})$ if $\partial g(x^*) \cap \partial h(x^*) \neq \emptyset$, or equivalently $0 \in \partial g(x^*) - \partial h(x^*)$, while it is called a strongly critical point of $g-h$ if $\emptyset \neq \partial h(x^*) \subset \partial g(x^*)$.

DCA is based on local optimality conditions and duality in DC programming,
which introduces the nice and elegant concept of approximating a DC program
by a sequence of convex ones: at each iteration $k$, DCA
approximates the second DC component $h$ by its affine minorization
$h_k(x) = h(x^k) + \langle x- x^k, y^k\rangle$, with $y^k \in \partial h(x^k)$,
and then solves the resulting convex subprogram  to get $x^{k+1}.$ The standard DCA is formally described as follows.

\bigskip
\noindent \textbf{Standard DCA.}

\noindent \textbf{Initialization:} Let $x^0 \in \dom \partial h$ and $k=0$.

\noindent \textbf{repeat}

Step 1: Compute the subgradient $y^{k}\in \partial h(x^{k})$.

Step 2: Solve the following convex program
\begin{equation*}
x^{k+1}\in \argmin \{g(x)-h_{k}(x):x\in X\}.
\end{equation*}

Step 3: $k = k+1$.

\noindent \textbf{until} Stopping criterion.
\bigskip

Convergences properties of the standard DCA and its complete theoretical foundation in the DC programming framework   can be
found in \cite{lethi05,phamdinh97,PLT98}. For instance, it is especially worth  mentioning   that
the sequence $\{x^k\}$ generated by DCA has the following properties:
\begin{itemize}
\item[1.] The sequence $\{(g-h)(x^k)\}$ is decreasing.
\item[2.] If $(g-h)(x^{k+1})=(g-h)(x^k)$, then $x^k$ and $x^{k+1}$ are critical points of $(P_{dc})$ and DCA terminates at $k$-th iteration.
\item[3.] If $\rho(g)+\rho(h)>0$ then the series $\sum_{k=1}^{\infty}{\Vert x^{k+1}-x^k \Vert^2}$ converges.
\item[4.] If the optimal value $\alpha$ of the problem $(P_{dc})$ is finite and the sequences $\{x^k\}$ and $\{y^k\}$ are bounded, then every limit point $\tilde{x}$ of $\{x^k\}$ is a critical point of $g-h.$
\end{itemize}

\subsection{Some notions in probability theory}
\subsubsection{History of a stochastic process}
Given a stochastic process $
\mathcal{X}=\{X_k\}_{k=1}^{\infty}$, we define the history up to time $k$ of $\mathcal{X}$ by
$\mathcal{P}_k = \sigma(X_1,X_2,\ldots,X_k),$ where $\sigma(X_1,X_2,\ldots,X_k)$ is the sigma algebra generated by random variables $\{X_1,X_2,\ldots,X_k\}$. The sequence of increasing sigma algebras $\{\mathcal{P}_k\}$ is called a filtration.
\subsubsection{Rademacher average}
For a set of points $\{z_1,z_2, \ldots,z_l\}:=z^l$ in $\Xi$, the Rademacher average $R_l(g,z^l)$ is defined as
\begin{equation*}
R_l(g,z^l) = \mathbb{E}_{\sigma} \sup_{w \in S} \left \vert \dfrac{1}{l} \sum_{i=1}^{l}{\sigma_i g(w,z_i)} \right \vert,
\end{equation*}
where $\sigma_i's$ are i.i.d. random numbers such that $\sigma_i \in \{\pm 1\}$ with $\mathbb{P}(\sigma_i = 1) = \mathbb{P}(\sigma_i =-1) = 1/2$. The Rademacher average of a family of functions $\{g(\cdot,z) : z \in \Xi\}$, denoted by $R_l(g,\Xi)$, is defined as
\begin{align*}
R_l(g,\Xi) = \sup_{z_1 \in \Xi, z_2 \in \Xi, \ldots,z_l \in \Xi} R_l(g,z^l).
\end{align*}

\subsection{Online Stochastic DCA for solving (\ref{eq:ori})}
\label{sec:osDCA}
This subsection develops osDCA schemes for solving the problem (\ref{eq:ori}) which can be described as follows.
\subsubsection{Problem setting}
 Let $P_Z$ be the probability distribution of $Z$ on $\mathbb{R}^n$ and $\Xi = \supp(P_Z)$ be the support of $P_Z$. By definition, a point $x \in \mathbb{R}^n$ is in $\supp(P_Z)$ if $P_Z(N_x) >0$, for all neighborhood $N_x$ of $x$. Since a measure ``lives" in its support, we only need to work in $\Xi$ instead of $\mathbb{R}^n$. For instance, a Dirac measure $\delta_a$ concentrating at a single point $a$ admits a support containing only one point $a$; a discrete measure $\mu = \sum_{i=1}^{\infty}{\beta_i \delta_{a_i}}$ with $\beta_i >0$ admits a support $\{a_1,a_2,\ldots\}$. A basic property of $\Xi$ is that it is closed in $\mathbb{R}^n$. Moreover, $P_Z(\Xi^\complement) = 0$ since $\mathbb{R}^n$ is the topological Hausdoff space and $P_Z$ is a Radon measure in $\mathbb{R}^n$. Therefore, only the values of $g$ and $h$ on $S \times \Xi$ matter. For simplicity of presentation, we assume that $\dom g = \dom h = S \times \Xi$. That is, the value of $g$ and $h$ outside $S \times \Xi$ is set to $+\infty$. Here we use the convention $+\infty - (+\infty) = +\infty$. Moreover, $g$ and $h$ are assumed to be Borel measurable. It is noted that the Borel sigma algebra on $\mathbb{R} \cup \{+\infty\}$ is generated by the order topology of $\mathbb{R} \cup \{+\infty\}$. We assume that $g(w,Z), h(w,Z)$ are integrable for all $w \in S$. Let $G(w) = \mathbb{E}(g(w,Z))$ and $H(w) = \mathbb{E}(h(w,Z))$, it follows that $\dom G = \dom H = S$. Besides, we assume that $g(\cdot,z)$ and $h(\cdot,z)$ are convex, lower semicontinuous, for all $z \in \Xi$, $G$ and $H$ are lower semicontinuous, so that the problem (\ref{eq:ori}) is DC. Moreover, we need some mild additional assumptions as follows.

\begin{assumption}\label{asp1}
\begin{itemize}
\item[i.] For all $z \in \Xi$, $\dom \partial h(\cdot,z) = S.$

\item[ii.] $\bar{\rho} :=\rho_H + \inf_{z \in \Xi} \rho(g(\cdot,z))>0.$

\item[iii.] There exists a Borel measurable selector $\tau$ such that
\begin{equation*}
\forall w \in S, z \in \Xi, \tau(w,z) \in \partial_w h(w,z),
\end{equation*}
where $\tau$ is $L^2$ uniformly bounded in the sense that there exists a Borel measurable function $\tilde{\tau}$ such that $\tilde{\tau}(Z)^2$ is integrable and $\forall w \in S, z \in \Xi, \Vert \tau(w,z) \Vert \leq \tilde{\tau}(z).$

\item[iv.] $\sup_{w \in S} \vert F(w) \vert <+\infty$.
\end{itemize}
\end{assumption}

\begin{remark} 
\label{rem1}
 It is observed that the assumptions i), iii) and iv) are mild. On another hand, thanks to the regularization technique introduced in \cite{phamdinh97}, the assumption \ref{asp1}-(ii) is easily fulfilled by adding an $L_2$ regularizer to both DC components.
\end{remark}

\begin{assumption}
\label{asp2}
\begin{itemize}
\item[i.] There exists a Borel measurable function $\tilde{g}: \mathbb{R}^n \to \mathbb{R}$ such that $\tilde{g}(Z)$ is integrable and 
\begin{equation*}
\vert g(w,z) \vert \leq \tilde{g}(z),~\forall w \in S, z \in \Xi.
\end{equation*}

\item[ii.] $R_k(g,\Xi) \leq  N_g/k^{\alpha}$ with $N_g>0$ and $\alpha >0.$
\end{itemize}
\end{assumption}

It is noteworthy that the assumption \ref{asp2}-(ii) holds for various cases described as follows \cite{ermoliev13}.

\textbf{Case 1. Holder functions }$g(\cdot,z), z \in \Xi$.

Let $D$ be the length of a cube in $\mathbb{R}^m$ containing the compact set $S$. Suppose that $ \exists M,L>0$ and $\gamma \in (0,1]$ such that
\begin{itemize}
\item[1.] $\vert g(w,z) \vert \leq M, \forall w \in S, z \in \Xi$,

\item[2.] $\vert g(x,z) - g(y,z) \vert \leq L \Vert x - y \Vert^{\gamma}, \forall x,y \in S, z \in \Xi.$ 
\end{itemize}
Then, for any $\alpha \in (0,1/2)$, $R_k(g,\Xi) \leq N_g/k^{\alpha}$, where
\begin{equation*}
N_g = L D^{\gamma} m^{\frac{\gamma}{2}} + \dfrac{M \sqrt{m}}{\sqrt{\gamma (1-2 \alpha) e}}.
\end{equation*}

\textbf{Case 2. Holder functions} $g(w, \cdot), w \in S$.

Suppose that $\Xi$ is compact, let $D$ be the length of a cube in $\mathbb{R}^n$ that contains $\Xi$. Suppose that there exists $M, L,\gamma >0$ such that
\begin{itemize}
\item[1.] $\vert g(w,z) \vert \leq M, \forall w \in S, z \in \Xi.$

\item[2.] $\vert g(w,u) - g(w,v) \vert \leq L \Vert u-v \Vert^{\gamma}, \forall w \in S, u,v \in \Xi$.
\end{itemize}
Then $R_k(g,\Xi) \leq N_g/k^{\alpha}$ where $N_g = M + L D^{\gamma} n^{\frac{\gamma}{2}}$ and $\alpha = \gamma/(2\gamma + n).$

\textbf{Case 3. Discrete set} $\Xi$.

Suppose that the number of elements of $\Xi$ is finite, say $\vert \Xi \vert = N_{\Xi}$. Furthermore, assume that there exists $M>0$ such that
$\vert g(w,z)\vert \leq M, ~\forall w \in S, z \in \Xi.$ Then, $R_k(g,\Xi) \leq M\sqrt{N_{\Xi}/k},$ hence, $\alpha = 1/2.$

It turns out that assumption \ref{asp2} is not strong; hence a class of functions meeting the criteria is wide to cover many problems arising in practice. In three cases of Rademacher complexity presented above, though $\alpha$ in case 2 can be very small in the high-dimension regime, which makes our next algorithm impractical, the other two cases have $\alpha =1/2$ or arbitrarily near to $1/2$, which are appropriate sample rates in practice.

It should be stressed that the Rademacher complexity measures the richness of a class of functions. Therefore, roughly speaking, the function $g$ must be quite ``simple" in this Rademacher sense. This criterion naturally fulfills our demand as we want to control the variability of stochastic approximations made on $g$. 

\subsubsection{Online Stochastic DCA schemes}
We now introduce an osDCA scheme described in algorithm \ref{alg:osDCA2}.

\begin{algorithm}
	\caption{Online Stochastic DCA}
	\label{alg:osDCA2}
\begin{algorithmic}
\State \textbf{Initialization:} Choose $w^0 \in S$ and a sequence of sample sizes $\{n_k\}$, set $k=0$.
\Repeat
	\State 1. Draw independently $n_k$ samples $Z_{k,1},\ldots,Z_{k,n_k}$ from the distribution of $Z$ in such a way that they are also independent of the past.
	
	\State 2. Compute $t^k = \dfrac{1}{n_k} \sum_{i=1}^{n_k}{\tau(w^k,Z_{k,i})}$.
	
	\State 3. Solve the following convex program to get $w^{k+1}$,
	\begin{align*}
	w^{k+1} \in \argmin_{w \in \mathbb{R}^m}\left\{\dfrac{1}{n_k} \sum_{i=1}^{n_k}{g(w,Z_{k,i})} -\langle t^k,w \rangle \right\}.
	\end{align*}
	
	\State 4. Set $k=k+1$.
\Until{Stopping criterion.}
\end{algorithmic}
\end{algorithm}
The algorithm \ref{alg:osDCA2} is well defined with probability $1$. To be more specific, the set of events that makes algorithm \ref{alg:osDCA2} work is $\mathcal{V} = \cap_{k=1}^{\infty} \cap_{i=1}^{n_k} (Z_{k,i} \in \Xi)$ and hence $\mathbb{P}(\mathcal{V})=1.$ We denote $Z_k = Z_{k,1:n_k}$ and $\mathcal{P}_k = \sigma(Z_0,Z_1,\ldots,Z_{k-1},w^0,w^1,\ldots,w^k)$. We observe that $\{w^k\}_{k=0}^{\infty}$ is a predictable process and $\{t^k\}_{k=0}^{\infty}$ is an adapted process with respect to the filtration $\{\mathcal{P}_{k+1}\}_{k=0}^{\infty}.$ The convergence results of algorithm \ref{alg:osDCA2} are presented in theorem \ref{theoremosDCA2}.

\begin{theorem}
\label{theoremosDCA2}
Under assumptions \ref{asp1} and \ref{asp2}, let $\beta = \min\{\alpha,1\}$, if the sequence of sample sizes $\{n_k\}$ satisfies $\sum_{k=1}^{\infty}{n_k^{-\beta}} <+\infty$, the iterations of algorithm \ref{alg:osDCA2} satisfy:

1. There exists $F^{\infty}$ integrable such that $F(w^k) \to F^{\infty}$ a.s.

2. $\sum_{k=1}^{\infty}\Vert w^{k+1} - w^k \Vert^2 < +\infty$ a.s.

3. There exists a measurable set $\mathcal{L} \subset \Omega$ with $\mathbb{P}(\mathcal{L}) = 1$ such that for each $\omega \in \mathcal{L}$, every limit point of $\{w^k(\omega)\}$ is a critical point of $F = G-H.$
\end{theorem}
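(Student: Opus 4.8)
The plan is to analyze Algorithm \ref{alg:osDCA2} as a perturbed version of the deterministic DCA, where the stochastic approximations introduce two sources of error: the empirical mean $\frac{1}{n_k}\sum_i g(\cdot, Z_{k,i})$ differs from $G$, and $t^k$ differs from the true subgradient $\mathbb{E}(\tau(w^k,Z)) \in \partial H(w^k)$. The key structural fact I would exploit is that $w^{k+1}$ minimizes a strongly convex surrogate (strong convexity $\bar{\rho}>0$ comes from Assumption \ref{asp1}-(ii)), which yields a ``sufficient decrease'' inequality of the form $\hat{G}_k(w^{k+1}) - \langle t^k, w^{k+1}\rangle \leq \hat{G}_k(w^k) - \langle t^k, w^k\rangle - \frac{\bar\rho}{2}\|w^{k+1}-w^k\|^2$, where $\hat G_k$ denotes the empirical average of $g$. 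I would then convert this into a bound on $F(w^{k+1}) - F(w^k)$ by adding and subtracting the true $G$ and $H$, and by invoking the subgradient inequality for $H$ at $w^k$ with the exact subgradient $\mathbb{E}(\tau(w^k,Z))$. The residual terms will be: (a) the deviation $\hat G_k - G$ evaluated at $w^k$ and $w^{k+1}$, controlled in expectation by the Rademacher complexity bound $R_{n_k}(g,\Xi)\leq N_g/n_k^{\alpha}$ (Assumption \ref{asp2}-(ii)) via a symmetrization argument, and (b) the subgradient-estimation error $\langle \mathbb{E}(\tau(w^k,Z)) - t^k, w^{k+1}-w^k\rangle$, whose conditional variance is controlled by Assumption \ref{asp1}-(iii) and scales like $1/n_k$.

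The core of the argument is a supermartingale construction. I would define $V_k = F(w^k) + (\text{error tail})$ and show, by taking conditional expectation given $\mathcal{P}_{k+1}$ (recall $w^k$ is $\mathcal{P}_{k+1}$-measurable while the fresh samples $Z_k$ are independent of the past), that
\begin{equation*}
\mathbb{E}(F(w^{k+1}) \mid \mathcal{P}_{k+1}) \leq F(w^k) - \frac{\bar\rho}{2}\,\mathbb{E}(\|w^{k+1}-w^k\|^2 \mid \mathcal{P}_{k+1}) + c_k,
\end{equation*}
where $\sum_k c_k <+\infty$ almost surely because $c_k = O(n_k^{-\beta})$ and $\sum_k n_k^{-\beta}<+\infty$ by hypothesis. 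Since $F$ is bounded (Assumption \ref{asp1}-(iv)), the Robbins--Siegmund almost-supermartingale convergence theorem then delivers simultaneously conclusions 1 and 2: $F(w^k)$ converges almost surely to an integrable limit $F^\infty$, and $\sum_k \|w^{k+1}-w^k\|^2 < +\infty$ almost surely. The handling of the cross term in (b) is where I would need care: its conditional expectation is not obviously small because $w^{k+1}$ depends on the same samples as $t^k$, so I would either decouple it using Young's inequality to absorb $\frac{\bar\rho}{4}\|w^{k+1}-w^k\|^2$ against the decrease term and bound the leftover variance by $O(1/n_k)$, or split $t^k$ into its conditional mean plus a martingale-difference part.

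For conclusion 3, I would argue pathwise on the full-measure set $\mathcal{L}$ where conclusions 1 and 2 hold and the strong law of large numbers applies to the relevant empirical averages. Fix $\omega\in\mathcal{L}$ and a limit point $w^*$ along a subsequence $w^{k_j}\to w^*$. From $\sum_k\|w^{k+1}-w^k\|^2<\infty$ we get $\|w^{k+1}-w^k\|\to 0$, so $w^{k_j+1}\to w^*$ as well. Writing the optimality condition for the subproblem, $t^k \in \partial(\hat G_k)(w^{k+1}) + N_S(w^{k+1})$, I would pass to the limit along the subsequence: $t^{k_j}\to$ some $y^*\in\partial H(w^*)$ (using the uniform integrability from Assumption \ref{asp1}-(iii) and the closedness of the subdifferential graph of $H$), while the empirical subgradients of $\hat G_{k_j}$ converge into $\partial G(w^*) + N_S(w^*)$ by upper semicontinuity of the subdifferential map together with the uniform convergence of $\hat G_{k_j}$ to $G$ guaranteed by the Rademacher bound. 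This yields $y^* \in \partial G(w^*) \cap \partial H(w^*)$ modulo the normal cone, i.e. $0 \in \partial G(w^*) - \partial H(w^*) + N_S(w^*)$, which is exactly criticality of $F=G-H$ on $S$. The main obstacle I anticipate is the nonsmoothness of both components: establishing the limiting subgradient inclusion requires a graph-closedness / outer-semicontinuity argument for the subdifferentials under only almost-sure empirical convergence, and reconciling the sample-coupled quantities $t^k$ and $w^{k+1}$ without smoothness is the delicate point that the Rademacher-complexity control is designed to resolve.
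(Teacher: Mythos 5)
Your proposal is correct in substance and shares the paper's overall architecture: the strong-convexity ``sufficient decrease'' from the sampled subproblem, the subgradient inequality for $H$ at $\nu(w^k)=\mathbb{E}(\tau(w^k,Z))$, Young's inequality to absorb the cross term $\langle t^k-\nu(w^k),w^{k+1}-w^k\rangle$, the $O(1/n_k)$ conditional variance bound from Assumption \ref{asp1}-(iii), and the ghost-sample symmetrization bound $\mathbb{E}\sup_{w\in S}\vert G(w)-\tfrac{1}{n_k}\sum_i g(w,Z_{k,i})\vert \leq 2R_{n_k}(g,\Xi)$. It diverges from the paper in two technical choices. First, for conclusions 1 and 2 the paper does \emph{not} keep the decrease term in the one-step recursion: AM--GM with constant $\bar\rho/2$ cancels it exactly, convergence of $F(w^k)$ is obtained from the quasimartingale (semimartingale) convergence theorem applied to the positive parts of the conditional increments, and summability of $\Vert w^{k+1}-w^k\Vert^2$ is proved \emph{separately} with a $\bar\rho/4$ split followed by total expectations, which gives $\mathbb{E}\sum_k\Vert w^{k+1}-w^k\Vert^2<\infty$ directly. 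Your Robbins--Siegmund packaging is cleaner, but note what it literally yields is $\sum_k\mathbb{E}(\Vert w^{k+1}-w^k\Vert^2\mid\mathcal{P}_k)<\infty$ a.s.\ (the decrement in Robbins--Siegmund must be measurable with respect to the conditioning algebra); to get conclusion 2 you still need the standard fact that for nonnegative adapted summands, a.s.\ finiteness of the sum of conditional expectations implies a.s.\ finiteness of the sum itself --- or simply take total expectations as the paper does. Second, for conclusion 3 the paper avoids any Attouch-type outer-semicontinuity argument: from $t^k\in\partial G_k(w^{k+1})$ and $\nu(w^k)\in\partial H(w^k)$ it derives $G(w^k)+G^{\ast}(t^k)-\langle w^k,\nu(w^k)\rangle\to 0$ a.s.\ (using $\sup_S\vert G_k-G\vert\to 0$ to pass from $G_k^{\ast}$ to $G^{\ast}$), then gets $\nu^{\ast}\in\partial G(w^{\ast})$ from joint lower semicontinuity of $(w,y)\mapsto G(w)+G^{\ast}(y)$ plus Young's inequality. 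Your graph-closedness route also works, and the ``delicate point'' you flag is in fact elementary here: write the subgradient inequality $G_{k_j}(w)\geq G_{k_j}(w^{k_j+1})+\langle t^{k_j},w-w^{k_j+1}\rangle$, replace $G_{k_j}$ by $G$ up to the uniform error, and pass to the limit using lower semicontinuity of $G$; no appeal to Attouch's theorem is required.

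Two minor corrections. With the paper's filtration $\mathcal{P}_k=\sigma(Z_0,\ldots,Z_{k-1},w^0,\ldots,w^k)$, the conditioning should be on $\mathcal{P}_k$ (not $\mathcal{P}_{k+1}$): $w^k$ is $\mathcal{P}_k$-measurable and the fresh block $Z_k$ is independent of $\mathcal{P}_k$. Also, your limit-point argument tacitly uses the a.s.\ statements $t^k-\nu(w^k)\to 0$ and $\sup_S\vert G_k-G\vert\to 0$; these should be recorded explicitly, and they follow from summability of $\sum_k n_k^{-1}$ and $\sum_k n_k^{-\alpha}$, both of which are implied by $\sum_k n_k^{-\beta}<\infty$ since $n_k\geq 1$ and $\beta=\min\{\alpha,1\}$.
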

\begin{proof}
1. Let $\nu(w):= \mathbb{E}(\tau(w,Z))$. It follows from $\nu(w^k) \in \partial H(w^k)$ that
\begin{equation}
\label{eqpls:01}
H(w^{k+1}) \geq H(w^k) + \langle \nu(w^k), w^{k+1}-w^k\rangle + \dfrac{\rho_H}{2}\Vert w^{k+1}-w^k \Vert^2.
\end{equation}
On the other hand, it follows from definition of $w^{k+1}$ that
\begin{align}
\label{eqpls:02}
\dfrac{1}{n_k} \sum_{i=1}^{n_k}{g(w^{k},Z_{k,i})} \geq &\dfrac{1}{n_k} \sum_{i=1}^{n_k}{g(w^{k+1},Z_{k,i})} + \langle t^k, w^k-w^{k+1}\rangle +\dfrac{\inf_{z\in \Xi}\rho(g(\cdot,z))}{2} \Vert w^{k+1} -w^k \Vert^2.
\end{align}
From (\ref{eqpls:01}) and (\ref{eqpls:02}), we obtain

\begin{equation}\label{eq:07}
\begin{array}{ll}
\dfrac{1}{n_k} \sum_{i=1}^{n_k} g(w^{k+1},Z_{k,i}) \leq &H(w^{k+1}) + \dfrac{1}{n_k} \sum_{i=1}^{n_k} g(w^k,Z_{k,i}) \\
&- H(w^k)-\dfrac{\bar{\rho}}{2} \Vert w^{k+1} - w^k \Vert^2 + \langle t^k -\nu(w^k), w^{k+1}-w^k\rangle,
\end{array}
\end{equation}
with $\bar{\rho} = \rho_H + \inf_{z\in \Xi}\rho(g(\cdot,z)).$ By taking conditional expectation with respect to $\mathcal{P}_k$ both sides of (\ref{eq:07}), we obtain
\begin{align}
\label{eqpls:03}
&\mathbb{E}(F(w^{k+1}) - F(w^k) \vert \mathcal{P}_k) \leq  \mathbb{E}(\langle t^k - \nu(w^k),w^{k+1}-w^k\rangle \vert \mathcal{P}_k) \notag \\ &+\mathbb{E}\left( G(w^{k+1}) - \dfrac{1}{n_k} \sum_{i=1}^{n_k} g(w^{k+1},Z_{k,i}) \vert \mathcal{P}_k \right)-\dfrac{\bar{\rho}}{2}\mathbb{E}(\Vert w^{k+1}-w^k \Vert^2 \vert \mathcal{P}_k).
\end{align}

By applying Schwartz inequality and Holder inequality,
\begin{align}
\label{eqpls:04}
&\mathbb{E}(\langle t^k - \nu(w^k), w^{k+1} - w^k \rangle \vert \mathcal{P}_k) \leq \mathbb{E}(\Vert t^k - \nu(w^k) \Vert^2 \vert \mathcal{P}_k)^{\frac{1}{2}} \mathbb{E}(\Vert w^{k+1}-w^k \Vert^2 \vert \mathcal{P}_k)^{\frac{1}{2}}.
\end{align}
By using AM-GM inequality, we obtain
\begin{align}
\label{eqpls:05}
&\mathbb{E}(\Vert t^k - \nu(w^k) \Vert^2 \vert \mathcal{P}_k)^{\frac{1}{2}} \mathbb{E}(\Vert w^{k+1}-w^k \Vert^2 \vert \mathcal{P}_k)^{\frac{1}{2}} \leq \dfrac{1}{2\bar{\rho}}\mathbb{E}(\Vert t^k - \nu(w^k) \Vert^2 \vert \mathcal{P}_k) + \dfrac{\bar{\rho}}{2}\mathbb{E}(\Vert w^{k+1}-w^k \Vert^2 \vert \mathcal{P}_k).
\end{align}
It follows from the independence of $Z_{k,i}$ and $Z_{k,j}$ for all $i \neq j$ that
\begin{align*}
\mathbb{E} \left( \Vert t^k -\nu(w^k) \Vert^2 \vert \mathcal{P}_k \right) &= \dfrac{1}{n_k^2} \sum_{i=1}^{n_k} \mathbb{E} \left( \Vert \tau(w^k,Z_{k,i}) -\nu(w^k) \Vert^2 \vert  \mathcal{P}_k \right).
\end{align*}
We observe that
\begin{align*}
&\mathbb{E}\left(\Vert \tau(w^k,Z_{k,i}) -\nu(w^k) \Vert^2 \vert \mathcal{P}_k \right) \\
&= \mathbb{E}_Z(\Vert \tau(w^k,Z) \Vert^2) + \Vert \nu(w^k)\Vert^2 -2 \langle \mathbb{E}(\tau(w^k,Z_{k,i}) \vert \mathcal{P}_k), \nu(w^k)\rangle\\
&= \mathbb{E}_Z \left( \Vert \tau(w^k,Z) \Vert^2 \right)-\Vert \nu(w^k) \Vert^2=\mathbb{V}_Z(\tau(w^k,Z)).
\end{align*}
Therefore,
\begin{equation}
\label{eq:06}
\mathbb{E} \left( \Vert t^k -\nu(w^k) \Vert^2 \vert \mathcal{P}_k \right) = \dfrac{1}{n_k} \mathbb{V}_Z(\tau(w^k,Z)).
\end{equation}

From (\ref{eqpls:03}), (\ref{eqpls:04}), (\ref{eqpls:05}), and (\ref{eq:06}) we obtain
\begin{align}
\label{eqnewwww}
&\mathbb{E}(F(w^{k+1}) - F(w^k) \vert \mathcal{P}_k) \leq \dfrac{\mathbb{V}_Z(\tau(w^k,Z))}{2 \bar{\rho} \times n_k} +\mathbb{E}\left( G(w^{k+1}) - \dfrac{1}{n_k} \sum_{i=1}^{n_k} g(w^{k+1},Z_{k,i}) \vert \mathcal{P}_k \right).
\end{align}

Next, we make an upper bound on the right-hand side of (\ref{eqnewwww}). Firstly, the (nonnegative) term $\mathbb{V}_Z(\tau(w^k,Z))$ is bounded above by $\mathbb{E}(\tilde{\tau}(Z)^2).$ Secondly, we show that
\begin{equation}
\label{eq:rademacher}
\mathbb{E}\left( \sup_{w \in S} \left \vert G(w) - \dfrac{1}{n_k} \sum_{i=1}^{n_k} g(w,Z_{k,i}) \right \vert \right)\leq 2 R_{n_k}(g,\Xi).
\end{equation}
To prove (\ref{eq:rademacher}), let us first introduce ``ghost samples" $Z'_{k,1},Z'_{k,2},\ldots,Z'_{k,n_k}$ (similar to the arguments in \cite{boucheron05}) that are independent of all $Z_{k,i}$ and identically distributed with $Z$. By Jensen's inequality, we get
\begin{align*}
&\left \vert \dfrac{1}{n_k} \sum_{i=1}^{n_k} g(w,Z_{k,i}) -\mathbb{E}(g(w,Z)) \right \vert\leq \mathbb{E} \left( \left \vert \dfrac{1}{n_k} \sum_{i=1}^{n_k} {\left(g(w,Z_{k,i})-g(w,Z'_{k,i})\right)} \right \vert \vert Z_{k,i},i=\overline{1,n_k} \right).
\end{align*}
Therefore,
\begin{align*}
&\mathbb{E} \left( \sup_{w \in S} \left \vert \dfrac{1}{n_k} \sum_{i=1}^{n_k} g(w,Z_{k,i}) -\mathbb{E}(g(w,Z)) \right \vert \right) \leq \mathbb{E} \left( \sup_{w \in S} \left \vert \dfrac{1}{n_k} \sum_{i=1}^{n_k} g(w,Z_{k,i}) - \dfrac{1}{n_k} \sum_{i=1}^{n_k} g(w,Z'_{k,i}) \right \vert \right).
\end{align*}
Now let $\sigma_1, \sigma_2,\ldots, \sigma_{n_k}$ be independent random variables with $\mathbb{P}(\sigma_i = 1) = \mathbb{P}(\sigma_i = -1) = \frac{1}{2}$ in such a way that they are also independent of $Z_{k,i}$ and $Z'_{k,i}$. Then,
\begin{align*}
&\mathbb{E} \left( \sup_{w\in S} \left \vert \dfrac{1}{n_k} \sum_{i=1}^{n_k} g(w,Z_{k,i}) - \dfrac{1}{n_k} \sum_{i=1}^{n_k} g(w,Z'_{k,i}) \right \vert \right) \\
&= \mathbb{E} \left( \sup_{w \in S} \left \vert \dfrac{1}{n_k} \sum_{i=1}^{n_k} \sigma_i (g(w,Z_{k,i})-g(w,Z'_{k,i})) \right \vert \right)\\
&\leq 2 \mathbb{E} \left( \sup_{w \in S} \dfrac{1}{n_k} \left \vert \sum_{i=1}^{n_k} \sigma_i g(w,Z_{k,i}) \right \vert \right)\\
&= 2\mathbb{E} \left( \mathbb{E}_{\sigma} \left( \sup_{w \in S} \dfrac{1}{n_k} \left \vert \sum_{i=1}^{n_k} \sigma_i g(w,Z_{k,i}) \right \vert \right) \right)\\
&=2\mathbb{E}(R_{n_k}(g,Z^{n_k})) \leq 2\mathbb{E}(R_{n_k}(g,\Xi)) = 2R_{n_k}(g,\Xi).
\end{align*}

Now, we establish the almost sure convergence of the sequence $\{F(w^k)\}$ as follows. The assumption \ref{asp1}-(iv) implies that there exists $R$ such that $F(w) \geq R, \forall w \in S.$ Let $D(w) = F(w)-R \geq 0$ and $S_k = [\mathbb{E}(D(w^{k+1}) -D(w^k) \vert \mathcal{P}_k)>0]$. Since $S_k$ is $\mathcal{P}_k$-measurable and by using (\ref{eqnewwww}), (\ref{eq:rademacher}) , we obtain
\begin{align*}
&\sum_{k=1}^{\infty}\mathbb{E}(1_{S_k}(D(w^{k+1}) - D(w^k))) \\
&=\sum_{k=1}^{\infty}{\mathbb{E}\left( \mathbb{E}(1_{S_k}(D(w^{k+1})-D(w^k)) \vert \mathcal{P}_k) \right)} \\
&\leq \dfrac{1}{2 \bar{\rho}} \sum_{k=1}^{\infty} \dfrac{\mathbb{E}(\mathbb{V}_Z(\tau(w^k,Z)))}{n_k} + 2 \sum_{k=1}^{\infty} R_{n_k}(g,\Xi) \\
&\leq \dfrac{\mathbb{E}(\tilde{\tau}(Z)^2)}{2 \bar{\rho}} \sum_{k=1}^{\infty}{\dfrac{1}{n_k}} + 2 N_g\sum_{k=1}^{\infty} \dfrac{1}{n_k^{\alpha}} <+\infty.
\end{align*}
It follows from semimartingale convergence theorem \cite{metivier82} that there exists $D^{\infty}$ integrable such that $D(w^k) \to D^{\infty}$ a.s., which implies $F(w^k) \to F^{\infty}=D^{\infty}+R$ a.s.

2. By applying AM-GM inequality, we obtain
\begin{align*}
\langle t^k - \nu(w^k), w^{k+1} -w^k \rangle \leq &\dfrac{1}{\bar{\rho}} \Vert t^k -\nu(w^k) \Vert^2 + \dfrac{\bar{\rho}}{4} \Vert w^{k+1} -w^k \Vert^2.
\end{align*}
Combining this inequality with (\ref{eq:07}), we get
\begin{align*}
&\dfrac{\bar{\rho}}{4} \Vert w^{k+1} -w^k \Vert^2 \leq F(w^k) - F(w^{k+1}) +G(w^{k+1}) -G(w^{k})\\&-\dfrac{1}{n_k} \sum_{i=1}^{n_k} g(w^{k+1},Z_{k,i})+\dfrac{1}{n_k} \sum_{i=1}^{n_k} g(w^{k},Z_{k,i}) +\dfrac{1}{\bar{\rho}}\Vert t^k -\nu(w^k) \Vert^2.
\end{align*}
By applying Lebesgue dominated convergence theorem (theorem 4.2, \cite{brezis10}) and noticing that
\begin{align*}
\mathbb{E}\left( \mathbb{E} \left(\dfrac{1}{n_k} \sum_{i=1}^{n_k} g(w^{k},Z_{k,i}) - G(w^k) \vert \mathcal{P}_k \right) \right)=0,
\end{align*}
we get
\begin{align*}
&\dfrac{\bar{\rho}}{4} \mathbb{E}\left( \sum_{k=1}^{\infty} \Vert w^k-w^{k+1} \Vert^2 \right) \leq \mathbb{E}(F(w^1)) - \mathbb{E}(F^{\infty})\\&+\dfrac{M}{\bar{\rho}} \sum_{k=1}^{\infty} \dfrac{1}{n_k} + 2 N_g\sum_{k=1}^{\infty} \dfrac{1}{n_k^{\alpha}} < \infty.
\end{align*}
Therefore, $\sum_{k=1}^{\infty} \Vert w^k-w^{k+1} \Vert^2 < +\infty \text { a.s.}$

3. We denote $G_k(w) = \frac{1}{n_k} \sum_{i=1}^{n_k} g(w,Z_{k,i})$, it follows from $t^k \in \partial G_k(w^{k+1})$ and $\nu(w^k) \in \partial H(w^k)$ that
\begin{align*}
&\langle w^{k+1},t^k \rangle = G_k(w^{k+1}) + G^{\ast}_k(t^k), \\
&\langle \nu(w^k),w^k \rangle = H(w^k) + H^{\ast}(\nu(w^k)).
\end{align*}
Together with the following inequalities
\begin{align*}
&H(w^{k+1}) \geq H(w^k) + \langle \nu(w^k),w^{k+1} - w^k \rangle,\\
&G_k(w^{k+1}) -\langle t^k, w^{k+1} \rangle \leq G_k(w^k) - \langle t^k,w^k \rangle,
\end{align*}
we obtain
\begin{align*}
&G_k(w^k) - H(w^k) \geq H^{\ast}(\nu(w^k)) - G_k^{\ast}(t^k) + \langle t^k -\nu(w^k),w^k \rangle\\
&\geq G_k(w^{k+1})-H(w^{k+1}) +\langle t^k -\nu(w^k),w^k-w^{k+1} \rangle,
\end{align*}
which implies
\begin{equation}
\label{eq:08}
G_k(w^k) - H(w^k) - H^{\ast}(\nu(w^k)) + G_k^{\ast}(t^k) \to 0,
\end{equation}
since $G_k(w^k)-H(w^k) \to F^{\infty}$, $t^k - \nu(w^k) \to 0$, and $ G_k(w^{k+1}) - H(w^{k+1}) \to F^{\infty}$.

Hence, (\ref{eq:08}) implies $G(w^k) + G_k^{\ast}(t^k)- \langle w^k,\nu(w^k) \rangle \to 0.$

It is observed that
\begin{align*}
&\vert G_k^{\ast}(t^k) - G^{\ast}(t^k) \vert = \left \vert \sup_{x \in S} \{\langle x,t^k \rangle - G_k(x)\} - \sup_{x \in S}\{\langle x,t^k \rangle - G(x)\} \right \vert \leq \sup_{x \in S}\vert G_k(x) - G(x) \vert \to 0.
\end{align*}
Hence, we obtain $G(w^k) + G^{\ast}(t^k) - \langle w^k,\nu(w^k) \rangle \to 0$ a.s. Now let $\mathcal{L}$ be an intersection of sets with probability $1$ gained from all almost surely true statements from the beginning of the proof, we have $\mathbb{P}(\mathcal{L})=1$ since there are at most countably finite statements. Let $\omega \in \mathcal{L}$, we have $\{w^k(\omega)\}$ and $\{\nu(w^k(\omega))\}$ are bounded. Let $w^{\ast} \in S$ be a limit point of $\{w^k(\omega)\}$, there exists a subsequence $\{w^{k_j}(\omega)\}$ such that $w^{k_j}(\omega) \to w^{\ast}$. By extracting a subsequence of $\{\nu(w^{k_j}(\omega))\}$ if necessary, we can assume that $\nu(w^{k_j}(\omega)) \to \nu^{\ast}$, which implies $t^{k_j}(\omega) \to \nu^{\ast}$. Therefore, $G(w^{k_j}(\omega)) + G^{\ast}(t^{k_j}(\omega)) \to \langle w^{\ast}, \nu^{\ast} \rangle.$ By letting $j \to +\infty$ and noting that $\theta(w,z) = G(w) +G^{\ast}(z)$ is lower semicontinuous, we obtain $G(w^{\ast})+G^{\ast}(\nu^{\ast}) \leq \langle w^{\ast},\nu^{\ast} \rangle$. On the other hand, according to Young's inequality, $G(w^{\ast})+G^{\ast}(\nu^{\ast}) \geq \langle w^{\ast},\nu^{\ast} \rangle$. Thefore, $G(w^{\ast})+G^{\ast}(\nu^{\ast}) = \langle w^{\ast},\nu^{\ast} \rangle$. In other words, $\nu^{\ast} \in \partial G(w^{\ast})$. Furthermore, for each $w \in S$, it follows from $\nu(w^{k_j}(\omega)) \in \partial H(w^{k_j}(\omega))$ that $H(w) \geq H(w^{k_j}(\omega))+ \langle \nu(w^{k_j}(\omega)),w-w^{k_j}(\omega) \rangle,$ which implies $H(w) \geq H(w^{\ast}) + \langle \nu^{\ast}, w- w^{\ast} \rangle.$ Therefore, $\nu^{\ast} \in \partial H(w^{\ast})$, and we conclude that $w^{\ast}$ is a critical point of $F=G-H$ since $\partial G(w^{\ast}) \cap \partial H(w^{\ast}) \neq \emptyset.$
\end{proof}

\begin{remark}
\label{rem2} 
(i) The algorithm only uses samples at the current time to update the solution (past samples are no longer used). Therefore, even if the distribution of $Z$ changes at a certain time (suppose that, due to some real-world events, $Z$ becomes $Z'$ at the iteration $k$), the algorithm will automatically solve the problem (\ref{eq:ori}) with $Z$ being replaced by $Z'$. Indeed, the current solution $w^k$ can be considered as the initial point for restart, the algorithm continues operating based on new samples from the distribution of $Z'$. The theorem \ref{theoremosDCA2} is still valid, and the subsequential convergence with probability one to DC critical points of the DC problem associated with the new distribution is guaranteed. This is indeed an advantage of the osDCA. In contrast, intuitively, stochastic algorithms using aggregated update (still using old samples to compute the current solution) barely have this kind of adaptivity. We will conduct numerical experiments to study this aspect.
\\
(ii) Our algorithm and the convergence analysis can be extended to deal with the more general problem whose the random variables inside the first and the second DC components are not necessarily the same, i.e.,
$F(w) = \mathbb{E}(g(w,Z))-\mathbb{E}(h(w,\tilde{Z}))$. With this new setting, at the iteration $k$, we approximate values of $G$ and the subgradients of $H$ by using $n_k$ independent random samples obtained from the distribution of $Z$ and $\tilde{n}_k$ independent random samples obtained from the distribution of $\tilde{Z}$, respectively. The sample size sequences $\{n_k\}$ and $\{\tilde{n}_k\}$  need to increase in such a way that $\sum_{k=1}^{\infty}{n_k^{-\alpha}}<\infty$ and $\sum_{k=1}^{\infty}{\tilde{n}_k^{-1}}<\infty$.
\end{remark}

Next, we will discuss two scenarios where one can directly compute (without stochastically approximation) values of $G$ or subgradients of $H$. Since the information of $G$ (resp. subgradient of $H$) can be achieved, we will modify the algorithm \ref{alg:osDCA2} to exploit this advantage. Note that these two schemes are not special cases of the algorithm \ref{alg:osDCA2}, but they will coincide with the algorithm \ref{alg:osDCA2} in some cases.

\paragraph*{The values of $G$ can be directly computed without approximation}

In this case, $G$ does not need to be stochastically approximated, we replace the approximation of $G$ in step 3 of algorithm \ref{alg:osDCA2} by its true value, which results in algorithm \ref{alg:osDCA_noiselessG}.

\begin{algorithm}
	\caption{Online Stochastic DCA with exact $G$}
	\label{alg:osDCA_noiselessG}
Similar to algorithm \ref{alg:osDCA2}, where step 3 of algorithm \ref{alg:osDCA2} is replaced by the following step:
\begin{algorithmic}

	\State 3. Solve the following convex program to get $w^{k+1}$,
	\begin{align*}
	w^{k+1} \in \argmin_{w \in \mathbb{R}^m}\left\{G(w) -\langle t^k,w \rangle \right\}.
	\end{align*}

\end{algorithmic}
\end{algorithm}

With this algorithm, we obtain stronger convergence results since $G$ is computed exactly. Note that, in the convergence results of algorithm \ref{alg:osDCA2}, we impose the assumption \ref{asp2} in order to control the variance of the stochastic estimator of $G$. To study the convergence of algorithm \ref{alg:osDCA_noiselessG}, we do not need such an assumption. Furthermore, in the assumption \ref{asp1}, we replace the convexity condition $\rho_H + \inf_{z\in \Xi} \rho(g(\cdot,z))>0$ by the weaker one $\rho_H + \rho_G>0,$ which gives rise to a milder assumption called the assumption 1'. We obtain the convergence theorem \ref{theoremos_pls} whose proof is similar to the proof of theorem \ref{theoremosDCA2}.

\begin{theorem}
\label{theoremos_pls}
Under the assumption 1', if the sequence of sample sizes $\{n_k\}$ satisfies $\sum_{k=1}^{\infty}{n_k^{-1}} <+\infty$, then the iterations of algorithm \ref{alg:osDCA_noiselessG} satisfy:

1. There exists $F^{\infty}$ integrable such that $F(w^k) \to F^{\infty}$ a.s.

2. $\sum_{k=1}^{\infty}{\Vert w^{k+1} - w^k \Vert^2} < +\infty$ a.s.

3. There exists a measurable set $\mathcal{L} \subset \Omega$ with $\mathbb{P}(\mathcal{L}) = 1$ such that for each $\omega \in \mathcal{L}$, every limit point of $\{w^k(\omega)\}$ is a critical point of $F = G-H.$
\end{theorem}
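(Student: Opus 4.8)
The plan is to transcribe the three-part argument behind theorem~\ref{theoremosDCA2}, exploiting the fact that using the exact $G$ in step~3 removes every term that previously forced assumption~\ref{asp2}; throughout I write $\nu(w) := \mathbb{E}(\tau(w,Z))$ and replace the constant $\bar\rho$ by $\rho_G + \rho_H$, which is positive by assumption~1'. For part~1, first I would combine the $\rho_H$-strong-convexity inequality for $H$ at $w^k$ (using $\nu(w^k) \in \partial H(w^k)$) with the $\rho_G$-strong-convexity inequality for $G$ coming from the optimality condition $t^k \in \partial G(w^{k+1})$, to obtain the noisy descent estimate
\begin{equation*}
F(w^{k+1}) \leq F(w^k) - \tfrac{\rho_G + \rho_H}{2} \Vert w^{k+1} - w^k \Vert^2 + \langle t^k - \nu(w^k), w^{k+1} - w^k \rangle.
\end{equation*}
Because $G$ is exact, the empirical-mean discrepancy $G(w^{k+1}) - \tfrac{1}{n_k}\sum_i g(w^{k+1},Z_{k,i})$ of (\ref{eqnewwww}) never appears. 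Taking $\mathbb{E}(\cdot \mid \mathcal{P}_k)$ and applying Schwartz and AM--GM as in (\ref{eqpls:04})--(\ref{eqpls:05}) with $\bar\rho$ replaced by $\rho_G + \rho_H$, then reusing the variance identity (\ref{eq:06}), yields the clean bound $\mathbb{E}(F(w^{k+1}) - F(w^k) \mid \mathcal{P}_k) \leq \mathbb{E}(\tilde{\tau}(Z)^2)\big/\big(2(\rho_G + \rho_H)\,n_k\big)$, with no Rademacher summand. Hence the positive-part series is summable under the single hypothesis $\sum_k n_k^{-1} < \infty$, and the semimartingale convergence theorem applied to $D(w) = F(w) - R \geq 0$ (as in theorem~\ref{theoremosDCA2}) gives $F(w^k) \to F^\infty$ a.s.

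For part~2, I would re-split the cross term by AM--GM into $\tfrac{1}{\rho_G+\rho_H}\Vert t^k - \nu(w^k)\Vert^2 + \tfrac{\rho_G+\rho_H}{4}\Vert w^{k+1}-w^k\Vert^2$ and substitute into the descent inequality; again no $G_k - G$ terms survive, leaving
\begin{equation*}
\tfrac{\rho_G + \rho_H}{4}\Vert w^{k+1} - w^k\Vert^2 \leq F(w^k) - F(w^{k+1}) + \tfrac{1}{\rho_G + \rho_H}\Vert t^k - \nu(w^k)\Vert^2.
\end{equation*}
Taking total expectations, telescoping $F$ against $\mathbb{E}(F(w^1)) - \mathbb{E}(F^\infty) < \infty$, and using $\mathbb{E}\Vert t^k - \nu(w^k)\Vert^2 \leq \mathbb{E}(\tilde{\tau}(Z)^2)/n_k$ (summable) gives $\sum_k \Vert w^{k+1}-w^k\Vert^2 < \infty$ a.s.; the same summability gives $\sum_k \Vert t^k - \nu(w^k)\Vert^2 < \infty$, hence $t^k - \nu(w^k) \to 0$ a.s.

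For part~3, I would use the Fenchel identities $\langle w^{k+1}, t^k \rangle = G(w^{k+1}) + G^{\ast}(t^k)$ (with the true $G$, since $t^k \in \partial G(w^{k+1})$) and $\langle w^k, \nu(w^k)\rangle = H(w^k) + H^{\ast}(\nu(w^k))$, together with the two convexity inequalities, to derive $G(w^k) + G^{\ast}(t^k) - \langle w^k, \nu(w^k)\rangle \to 0$ a.s. This is where the proof genuinely shortens: the step of theorem~\ref{theoremosDCA2} bounding $\vert G_k^{\ast}(t^k) - G^{\ast}(t^k)\vert \leq \sup_x \vert G_k(x) - G(x)\vert$ --- the sole use of the uniform Rademacher control of assumption~\ref{asp2} --- is now unnecessary, since $G^{\ast}$ appears directly. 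On the a.s. event $\mathcal{L}$, I would take a limit point $w^\ast$ of $\{w^k(\omega)\}$, pass to a subsequence with $\nu(w^{k_j}) \to \nu^\ast$ (the $\{\nu(w^k)\}$ being bounded by assumption~1'-iii), so that $t^{k_j} \to \nu^\ast$; lower semicontinuity of $(w,z) \mapsto G(w) + G^{\ast}(z)$ with Young's inequality yields $\nu^\ast \in \partial G(w^\ast)$, and passing to the limit in the subgradient inequality for $H$ yields $\nu^\ast \in \partial H(w^\ast)$, so $w^\ast$ is critical for $F = G - H$.

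The hard part is not any single estimate but justifying the weakened hypothesis: algorithm~\ref{alg:osDCA2} minimizes the empirical mean $\tfrac1{n_k}\sum_i g(\cdot,Z_{k,i})$, whose strong-convexity modulus is only $\inf_z \rho(g(\cdot,z))$, whereas algorithm~\ref{alg:osDCA_noiselessG} minimizes $G$ itself, whose modulus is $\rho_G$. The needed inequality $\rho_G \geq \inf_z \rho(g(\cdot,z))$ --- which follows because $G - (\mu/2)\Vert\cdot\Vert^2 = \mathbb{E}\big(g(\cdot,Z) - (\mu/2)\Vert\cdot\Vert^2\big)$ is convex whenever each integrand is --- is precisely what licenses replacing assumption~\ref{asp1}-(ii) by the milder condition $\rho_G + \rho_H > 0$ of assumption~1', and is the only substantively new point relative to theorem~\ref{theoremosDCA2}.
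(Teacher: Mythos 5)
Your proposal is correct and is essentially the paper's own argument: the paper leaves this proof as ``similar to the proof of theorem \ref{theoremosDCA2}'', and your transcription carries out exactly the intended adaptation --- the empirical mean $G_k$ is replaced by $G$ throughout, which removes the Rademacher terms in parts 1--2 and the $\vert G_k^{\ast}(t^k)-G^{\ast}(t^k)\vert$ bound in part 3, while the subproblem's strong convexity modulus becomes $\rho_G$ so that $\bar\rho = \rho_G+\rho_H$ under assumption 1'. Your closing observation that $\rho_G \geq \inf_{z\in\Xi}\rho(g(\cdot,z))$ (hence assumption 1' is genuinely milder) is a correct justification of the paper's remark, even though it is not needed for the proof itself.
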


\paragraph*{The subgradients of $H$ can be directly computed without approximation}

In this case, we replace the stochastic estimator of the subgradient of $H$ in the algorithm \ref{alg:osDCA2} by the true subgradient of $H$ to obtain the following algorithm.

\begin{algorithm}
	\caption{Online Stochastic DCA with exact subgradients of $H$}
	\label{alg:osDCA_noiselessH}
Similar to algorithm \ref{alg:osDCA2}, where step 2 of algorithm \ref{alg:osDCA2} is replaced by the following step:
\begin{algorithmic}

	\State 2. Compute $t^k \in \partial H(w^k)$.

\end{algorithmic}
\end{algorithm}

Since we work directly on $H$, we replace assumption \ref{asp1}-(i) by $\dom \partial H = S.$ Likewise, the assumption \ref{asp1}-(iii) is replaced by the following:
\begin{align*}
\text{there exist } M>0 \text{ such that }
\forall w \in S, \forall t \in \partial H(w): \Vert t \Vert \leq M.
\end{align*}
These modifications bring about a new set of assumptions called assumption 1''. We obtain the following convergence results whose proof is similar to the proof of algorithm \ref{theoremosDCA2}.

\begin{theorem}
\label{theoremosDCA2_Gnoiseless}
Under assumptions 1'' and \ref{asp2}, if the sequence of sample sizes $\{n_k\}$ satisfies $\sum_{k=1}^{\infty}{n_k^{-\alpha}} <+\infty$, the iterations of algorithm \ref{alg:osDCA_noiselessH} satisfy:

1. There exists $F^{\infty}$ integrable such that $F(w^k) \to F^{\infty}$ a.s.

2. $\sum_{k=1}^{\infty}\Vert w^{k+1} - w^k \Vert^2 < +\infty$ a.s.

3. There exists a measurable set $\mathcal{L} \subset \Omega$ with $\mathbb{P}(\mathcal{L}) = 1$ such that for each $\omega \in \mathcal{L}$, every limit point of $\{w^k(\omega)\}$ is a critical point of $F = G-H.$
\end{theorem}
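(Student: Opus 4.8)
The plan is to follow the three-part template of the proof of Theorem~\ref{theoremosDCA2}, exploiting the single structural simplification afforded by Algorithm~\ref{alg:osDCA_noiselessH}: since step~2 now returns an \emph{exact} subgradient $t^k \in \partial H(w^k)$, the approximation error $t^k - \nu(w^k)$ that produced the variance term $\mathbb{V}_Z(\tau(w^k,Z))/(2\bar{\rho}\, n_k)$ in inequality (\ref{eqnewwww}) vanishes identically. Keeping $\bar{\rho} = \rho_H + \inf_{z\in\Xi}\rho(g(\cdot,z)) > 0$ (retained in assumption~1''), I would first reproduce the basic descent inequality. Writing $G_k(w) = \frac{1}{n_k}\sum_{i=1}^{n_k} g(w,Z_{k,i})$, convexity of $H$ with subgradient $t^k$ gives $H(w^{k+1}) \geq H(w^k) + \langle t^k, w^{k+1}-w^k\rangle + \frac{\rho_H}{2}\Vert w^{k+1}-w^k\Vert^2$, while the strong convexity and optimality of $w^{k+1}$ for $G_k - \langle t^k,\cdot\rangle$ yield $G_k(w^{k+1}) - \langle t^k,w^{k+1}\rangle \leq G_k(w^k) - \langle t^k,w^k\rangle - \frac{\inf_z\rho(g(\cdot,z))}{2}\Vert w^{k+1}-w^k\Vert^2$. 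Subtracting, the two linear terms in $t^k$ cancel and I obtain, with no stochastic cross term,
$$G_k(w^{k+1}) - H(w^{k+1}) \leq G_k(w^k) - H(w^k) - \frac{\bar{\rho}}{2}\Vert w^{k+1}-w^k\Vert^2,$$
the exact analogue of (\ref{eq:07}) but without the inner product $\langle t^k-\nu(w^k),\cdot\rangle$.

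For Part~1 I would take conditional expectation given $\mathcal{P}_k$. Because the samples $Z_{k,i}$ are fresh and $w^k$ is $\mathcal{P}_k$-measurable, $\mathbb{E}(G_k(w^k)\mid\mathcal{P}_k) = G(w^k)$, so the descent inequality collapses to $\mathbb{E}(F(w^{k+1})\mid\mathcal{P}_k) - F(w^k) \leq \mathbb{E}(G(w^{k+1}) - G_k(w^{k+1})\mid\mathcal{P}_k)$. I would control the positive part exactly as in (\ref{eq:rademacher}): by the symmetrization/Rademacher argument and Assumption~\ref{asp2}-(ii), $\mathbb{E}(\sup_{w\in S}|G(w) - G_k(w)|) \leq 2R_{n_k}(g,\Xi) \leq 2N_g\, n_k^{-\alpha}$. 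Here the crucial gain over Theorem~\ref{theoremosDCA2} appears: the absent variance term removes the need for $\sum_k n_k^{-1} < \infty$, leaving only $\sum_k n_k^{-\alpha} < \infty$, which is exactly the hypothesis of the theorem. Shifting $F$ by its finite lower bound $R$ (from assumption~\ref{asp1}-(iv), retained in~1'') to form a nonnegative process and invoking the semimartingale convergence theorem then gives $F(w^k) \to F^{\infty}$ a.s.

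Part~2 follows by rearranging the same descent inequality into $\frac{\bar{\rho}}{2}\Vert w^{k+1}-w^k\Vert^2 \leq F(w^k) - F(w^{k+1}) + [G_k(w^k) - G(w^k)] - [G_k(w^{k+1}) - G(w^{k+1})]$, taking total expectation (the first bracket vanishes by the tower property, the second is bounded by $2N_g\, n_k^{-\alpha}$), and telescoping the $F$-differences against $\mathbb{E}(F(w^1)) - \mathbb{E}(F^{\infty}) < \infty$; Tonelli then transfers finiteness of the expected sum to a.s.\ finiteness. For Part~3 I would use $t^k \in \partial G_k(w^{k+1})$ together with the now-exact $t^k \in \partial H(w^k)$ to write the two Fenchel equalities, reproduce the chain leading to (\ref{eq:08}), and conclude $G_k(w^k) + G_k^{\ast}(t^k) - \langle w^k, t^k\rangle \to 0$; replacing $G_k^{\ast}$ by $G^{\ast}$ and $G_k(w^k)$ by $G(w^k)$ is legitimate a.s.\ because $\sum_k \mathbb{E}(\sup_x|G_k(x) - G(x)|) < \infty$ forces $\sup_x|G_k - G| \to 0$ a.s. On the resulting probability-one set, $\{w^k\}$ is bounded since $S$ is compact, and $\{t^k\}$ is bounded by the \emph{new} uniform subgradient bound $\Vert t\Vert \leq M$ of assumption~1'' (this replaces the $L^2$ bound on $\tau$). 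Passing to a subsequence with $w^{k_j}\to w^{\ast}$ and $t^{k_j}\to t^{\ast}$, lower semicontinuity of $(w,z)\mapsto G(w)+G^{\ast}(z)$ and Young's inequality pin down $t^{\ast}\in\partial G(w^{\ast})$, while passing to the limit in the subgradient inequality for $H$ gives $t^{\ast}\in\partial H(w^{\ast})$; hence $w^{\ast}$ is critical.

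The routine martingale and descent bookkeeping carries over essentially verbatim, so the main obstacle is the limit-point analysis of Part~3: securing the \emph{almost sure} (not merely in-expectation) convergence $G_k^{\ast}(t^k) \to G^{\ast}(t^{\ast})$ along the subsequence and then extracting $t^{\ast} \in \partial G(w^{\ast})$ via lower semicontinuity of the conjugate sum. What must be checked with care is that the boundedness of $\{t^k\}$ — previously inherited from the $L^2$-boundedness of the selector $\tau$ via (\ref{eq:06}) — is now correctly supplied by the uniform bound $\Vert t\Vert \leq M$, and that no residual use of Assumption~\ref{asp1}-(iii) or of the variance estimate (\ref{eq:06}) survives once the subgradient of $H$ is computed exactly.
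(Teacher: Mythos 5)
Your proposal is correct and follows essentially the same route as the paper, which gives no separate proof for this theorem but declares it ``similar to the proof of Theorem~\ref{theoremosDCA2}'': your reconstruction is precisely that adaptation, correctly dropping the variance term and the AM--GM/Schwartz step once $t^k\in\partial H(w^k)$ is exact, retaining the Rademacher bound $\mathbb{E}(\sup_{w\in S}|G(w)-G_k(w)|)\leq 2R_{n_k}(g,\Xi)$ so that only $\sum_k n_k^{-\alpha}<\infty$ is needed, and correctly sourcing the boundedness of $\{t^k\}$ from the uniform bound $\Vert t\Vert\leq M$ of assumption 1'' rather than from the $L^2$ selector bound and the variance estimate (\ref{eq:06}). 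The points you flag for care (a.s.\ uniform convergence $\sup_{x\in S}|G_k(x)-G(x)|\to 0$ via summability of its expectation, and the Fenchel/lower-semicontinuity argument at limit points) are exactly the steps the paper's Theorem~\ref{theoremosDCA2} proof uses, and your handling of them is sound.
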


\begin{remark}
\label{rem3}
(i) When $g(w,z)$ does not depend on $z$, algorithm \ref{alg:osDCA_noiselessG} coincides with algorithm \ref{alg:osDCA2}; likewise, when $h(w,z)$ does not depend on $z$, algorithm \ref{alg:osDCA_noiselessH} and algorithm \ref{alg:osDCA2} coincide. It is worth noting that, in practice, thanks to the flexibility of DC decompositions, one can usually formulate the given stochastic problem as a stochastic DC program with one stochastic DC component and one deterministic DC component. For example, we consider $F(w) = \mathbb{E}(f(w,Z))$. If the functions $f(\cdot,z)$ are $L$-smooth with the same constant $L$ for all $z \in \Xi$. Then, $F$ has the following DC decomposition:
\begin{equation*}
F(w) = \underbrace{\dfrac{L}{2}\Vert w \Vert^2}_{G(w)} - \underbrace{\mathbb{E}\left(\dfrac{L}{2}\Vert w \Vert^2-f(w,Z) \right)}_{H(w)}.
\end{equation*}
In another case, suppose that there exists a convex function $\varphi(w)$ such that functions $f(w,z)+\varphi(w)$ are convex for all $z \in \Xi$ (in particular, when $\varphi(w) = (\kappa/2)\Vert w \Vert^2$, $f(\cdot,z)$ are weakly convex), $F$ has the following DC decomposition:
\begin{equation*}
F(w) = \underbrace{\mathbb{E}\left(f(w,Z)+\varphi(w) \right)}_{G(w)} - \underbrace{\varphi(w)}_{H(w)}.
\end{equation*} 
(ii) In big data analytics, large-sum problems play a key role. We consider the following large-sum objective function
\begin{equation*}
F(w) = \sum_{i=1}^{N}{\alpha_i f_i(w)} = \sum_{i=1}^{N}{\alpha_i g_i(w)} - \sum_{i=1}^{N}{\alpha_i h_i(w)},
\end{equation*}
where $g_i, h_i$ are convex, $\alpha_i \geq 0$ for all $i = \overline{1,N}$ and $\sum_{i=1}^{N}{\alpha_i} = 1.$
The function $F$ can be rewritten as $F(w) = \mathbb{E}(g_I(w)) - \mathbb{E}(h_I(w))$, where $I$ is a random index with $\mathbb{P}(I=i)=\alpha_i$. In this case, the distribution of $I$ is known completely. However, as $N$ can be very large, we may still need to apply osDCA schemes. Furthermore, since $I$ is known, we have full freedom to choose algorithm \ref{alg:osDCA2}, algorithm \ref{alg:osDCA_noiselessG}, or algorithm \ref{alg:osDCA_noiselessH} to apply, which leads to - in general - three distinctive algorithms. The practical trade-off between these algorithms would be which DC component (or none of them) is cheaper to be computed directly.
\end{remark}

\section{Applications: solving the Expected PCA}
Principal component analysis (PCA) is arguably one of the most successful tools for dimensionality reduction. In this section, we will apply osDCA schemes to the expected problem of PCA to study the generalization capacity of the proposed methods.

\subsection{osDCA schemes for solving Expected PCA}
We consider the following expected problem of PCA (denoted by E-PCA) as follows \cite{montanari15},

\begin{align*}
\quad \text{min}& \quad -\frac{1}{2}\mathbb{E}(\langle w,Z \rangle^2), \quad \text{subject to} \quad \Vert w \Vert \leq 1, \quad (\text{E-PCA})
\end{align*}
where $Z$ is a normalized random vector, i.e. $\Vert Z \Vert = 1$, with unknown distribution. The situation in which we are interested is that the data obtained online.

The problem (E-PCA) can be considered as the theoretical problem of the classic PCA (and - vice versa - the classic PCA is the empirical problem of (E-PCA)). In other words, the problem (E-PCA) aims to generalize the compressing capacity of the classical PCA on unseen data.

Firstly, we observe that the problem (E-PCA) is nonconvex and it can be formulated as a DC problem,
\begin{align}
\label{firstDC}
\quad \underset{w \in S}{\minimize} \quad G(w)-H(w),
\end{align}
where $G(w)=\frac{\lambda}{2}\Vert w \Vert^2, H(w) = \mathbb{E}\left( \frac{\lambda}{2} \Vert w \Vert^2 + \frac{1}{2} \langle w,Z \rangle^2 \right)$, $S = \{w \in \mathbb{R}^m: \Vert w \Vert \leq 1\}$ and $\lambda >0$. Although we have a very natural DC decomposition with $G(w)=0, H(w)=\mathbb{E}\left( \frac{1}{2} \langle w,Z \rangle^2 \right)$, here we add $\frac{\lambda}{2} \Vert w \Vert^2$ to both DC components to fulfill to assumption \ref{asp1}-(ii). Since the values $G$ are directly obtained without approximation, algorithm \ref{alg:osDCA2} coincides with algorithm \ref{alg:osDCA_noiselessG}. We call this scheme osDCA-1, where the $k$-th iteration is described as follows.

1. Receive $n_k$ samples $Z_{k,1},\ldots,Z_{k,n_k}$.
	
2. Compute $t^k = \lambda w^k + \frac{1}{n_k} \sum_{i=1}^{n_k}{\langle w^k,Z_{k,i} \rangle} Z_{k,i}$.
	
3. Update $w^{k+1} = \begin{cases} &\lambda^{-1} t^k \text{ if } \Vert t^k \Vert \leq \lambda\\
	 & t^k /\Vert t^k \Vert \text{ otherwise.}\end{cases}$

Secondly, it is well-known that if a function $\theta$ has $L$-Lipschitz continuous gradient, then $(L/2)\Vert \cdot \Vert^2 - \theta$ and $(L/2)\Vert \cdot \Vert^2 + \theta$ are convex. Therefore, we have another DC decomposition for the problem (E-PCA) as follows,
\begin{align}
\label{secondDC}
\underset{w \in S}{\minimize} \quad G(w)-H(w).
\end{align}
where
\begin{align*}
    G(w) &= \mathbb{E} \left(\dfrac{L}{2} \Vert w \Vert^2 - \dfrac{1}{2} \langle w,Z \rangle^2 \right),\\
    H(w) &= \mathbb{E} \left( \dfrac{L}{2} \Vert w \Vert^2 +\dfrac{1}{2} \langle w,Z \rangle^2 \right).
\end{align*}
Since $G, H$ remains unknown, we apply algorithm \ref{alg:osDCA2} for this DC problem. Obviously the family $\{g(\cdot,z) : \Vert z \Vert = 1\}$ is uniformly Lipschitz and uniformly bounded by a constant, therefore, the rate $\alpha$ in assumption \ref{asp2} can be chosen arbitrarily in $(0,1/2)$. With this setup, we obtain a second scheme called osDCA-2 whose the $k$-th iteration is described as follows.

1. Receive $n_k$ samples $Z_{k,1},Z_{k,2},\ldots,Z_{k,n_k}$.

2. Compute the stochastic gradient
\begin{align*}
t^k = L w^k + \dfrac{1}{n_k} \sum_{i=1}^{n_k}{\langle w^k,Z_{k,i} \rangle Z_{k,i}}.
\end{align*}

3. Solve the following convex program to get $w^{k+1}$,
\begin{align}
\label{cnvsub}
\underset{w \in S}{\minimize}\left\{ \dfrac{L}{2} \Vert w \Vert^2 - \dfrac{1}{2 n_k} \sum_{i=1}^{n_k}{\langle w,Z_{k,i} \rangle^2} - \langle t^k,w \rangle \right\}.
\end{align}

The problem (\ref{cnvsub}) is convex and can be solved by existing convex optimization packages. However, we solve it by DCA since it has the following ``false" DC decomposition
\begin{align*}
&\tilde{g}(w) = \dfrac{L}{2} \Vert w \Vert^2,\tilde{h}(w) = \dfrac{1}{2n_k} \sum_{i=1}^{n_k}{\langle w,Z_{k,i} \rangle^2 +\langle t^k,w \rangle,}
\end{align*}
which results in a simple DCA scheme where convex subproblems have closed-form solutions.  The (deterministic) DCA takes the current solution $u^0 = w^k$ as the initial point, then operates until the stopping criterion which is $\Vert u^{l+1} - u^{l} \Vert < \epsilon$ is met, where $\epsilon>0$ is the error tolerance.

\subsection{Numerical experiments}
\subsubsection{Datasets}
The numerical experiments are conducted on standard machine learning datasets on LIBSVM \footnote{The datasets can be downloaded from \url{https://www.csie.ntu.edu.tw/~cjlin/libsvm/}.}. The information of the used datasets is described in Table 1. The samples of each dataset are normalized as $\Vert z_i \Vert = 1.$

\begin{table}[H]
\label{datainfo}
\begin{center}
\begin{tabular}{>{\centering\arraybackslash}p{3.1cm}|>{\centering\arraybackslash}p{1.9cm}|>{\centering\arraybackslash}p{1.55cm}|>{\centering\arraybackslash}p{2.1cm}}
Dataset & \quad \# Features \quad & \quad \# Train set \quad & \quad \# Validation set \quad \\
\hline
letter & 16 & 15000 & 5000 \\
YearPredictionMSD & 90 & 463715 & 51630 \\
SensIT Vehicle & 100 & 78823 & 19705 \\
shuttle & 9 & 43500 & 14500 \\
\end{tabular}
\end{center}
\caption{Datasets' information}
\end{table}

Furthermore, to test the adaptive ability of osDCA schemes, we generate a synthetic dataset that consists of two subdatasets (training set ($200000 \times 500$), validation set ($500000 \times 500$)) and (training set ($200000 \times 500$), validation set ($200000 \times 500$)), in which the generating mechanism is described in subsection \ref{exp_res}.
\subsubsection{Comparative algorithms}

We compare our algorithms with two versions of Projected Stochastic Subgradient method (PSS) \cite{davis2019stochastic} - an online algorithm for weakly convex objective functions, and four Stochastic DCA schemes (SDCA) \cite{an2020stochastic} proposed for nonconvex, nonsmooth DC programs.
\subsubsection{Experiment setup and results}
\label{exp_res}

The numerical experiments comprise of four parts. The first experiment is the comparative experiment between the proposed algorithms with two versions of PSS and four SDCA schemes, the second experiment studies our algorithms' behaviors when the DC decomposition of the problem varies, the third experiment compares between convex solvers for solving subproblems, and the fourth experiment studies the adaptive capacity of osDCA schemes.

In the first experiment, we compare osDCA schemes with two versions of PSS (constant stepsize policy and diminishing stepsize policy) and four SDCA schemes. Firstly, we ran the PSS with many different stepsizes and observed its performance in order to choose a proper range to find a good stepsize. We then ran the PSS with the constant stepsize in $\{0.001,0.005,0.01,0.015,0.02\}$ and found that the stepsize $0.005$ consistently gives good performance on four validation sets. About the diminishing stepsize $\alpha_k=c/k$, we ran PSS with $c$ being chosen in $\{4,5,\ldots,11,12\}$ and found that $c=8$ achieves good performance on all four datasets. For the four SDCA schemes, it should be stressed that SDCA1 and SDCA3 require the first DC component of the objective to be explicitly defined, meanwhile, SDCA2 and SDCA4 can handle the unknown first DC component. Therefore, we apply SDCA1 and SDCA3 to (\ref{firstDC}) with $\lambda = 10^{-6}$ that yields good results; meanwhile, SDCA2 and SDCA4 are applied to (\ref{secondDC}) where $L=1$. We use the sequence of equal weights for all four SDCA schemes. On the other hand, based on the theoretical analysis, the parameters of osDCA schemes are chosen as follows. For the osDCA-1, we choose the sequence of sample sizes as $n_k=k^2$, and $\lambda=1$ which is a neutral number and results in a good performance over four datasets. For the osDCA-2, the sequence of sample sizes is chosen as $n_k=k^3$, the Lipschitz smoothness constant $L=1$ and the tolerance error in solving subproblems $\epsilon = 10^{-5}.$

As a preprocessing step, each training dataset is randomly shuffled before each run. Then, the mentioned algorithms perform one pass through each training dataset and automatically terminate when the training dataset is used up. The starting points are also randomly initialized in $S$. The performance of our algorithms are measured on the validation set to guarantee their generalization capability. To enhance visualization, we first find the ``optimal solution" $w^*$ on the validation set by running deterministic DCA. We then report the suboptimality graph $F(w_n)-F(w^*)$ (under the log-scale) averaging over $20$ runs. Furthermore, we classify osDCA-1, SDCA1, SDCA3 in one group and osDCA-2, SDCA2, SDCA4 in another group (since the former three use the DC decomposition (\ref{firstDC}) and the latter three use (\ref{secondDC})) to plot them in two different figures.

All experiments are performed on a PC Intel(R) Core(TM) i7-8700 CPU @3.20GHz of 16 GB RAM.

Figures \ref{fig1} and \ref{fig1b} illustrate the performance of osDCA schemes compared with SDCA schemes and the PSS with constant stepsize and diminishing stepsize. 

\textit{Comparisons between osDCA schemes and PSS.} Our algorithms take a very short amount of time to pass through the training sets while obtaining really small suboptimality values, say $10^{-4} \sim 10^{-5}.$  In contrast, the PSS with constant stepsize struggles to reach the optimal solution and exhibits the well-known fluctuation behavior with the suboptimality varying around $10^{-3} \sim 10^{-4}.$ On the other hand, PSS with diminishing stepsize performs very well and obtains similar suboptimality as osDCA schemes, where the differences (i.e., $F_{\text{val}}(w_{\text{pss}})-F_{\text{val}}(w_{\text{osdca}})$, where $F_{\text{val}}$ is the objective function measured on the validation set, $w_{\text{pss}}$ and $w_{\text{osdca}}$ are solutions found by PSS and osDCA, respectively) between this PSS and osDCA-1 (resp. osDCA-2) range from $-6.18 \times 10^{-6}$ to $-1.45 \times 10^{-6}$ (resp. from $-3.84 \times 10^{-7}$ to $5.89 \times 10^{-6}$).  To obtain this result, osDCA-1 (resp. osDCA-2) is $2.7 \sim 18.4$ (resp. $1.7 \sim 32.3$) times faster the PSS with diminishing stepsize. 

\textit{Comparisons between osDCA and SDCA.} The differences (i.e., $F_{\text{val}}(w_{\text{sdca}})-F_{\text{val}}(w_{\text{osdca}})$) between SDCA1 (resp. SDCA3) and osDCA-1 vary from $-2.23 \times 10^{-5}$ to $-2.44 \times 10^{-6}$ (resp. $-2.25 \times 10^{-5}$ to $-2.44 \times 10^{-6}$). Wall-clock time for osDCA-1 to perform one pass through the training datasets is $2.7 \sim 18.5$ ($6.1 \sim 14.5$) times shorter than SDCA1 (resp. SDCA3). The differences (i.e., $F_{\text{val}}(w_{\text{sdca}})-F_{\text{val}}(w_{\text{osdca}})$) between SDCA2 (resp. SDCA4) and osDCA-2 are from $-7.81 \times 10^{-7}$ to $5.53 \times 10^{-4}$ (resp. $-1.04 \times 10^{-5}$ to $-8.95 \times 10^{-7}$). Moreover, osDCA-2 makes one pass through the training datasets $5.5 \sim 24.3$ (resp. $4.3 \sim 17.6$) times faster than SDCA2 (resp. SDCA4). We also observe that, at the time osDCA schemes terminate, they usually obtain smaller optimality values than SDCA schemes.

\begin{figure}
     \centering
        \subfigure[\texttt{SensIT Vehicle}]{	
	\includegraphics[width=0.233\textwidth]{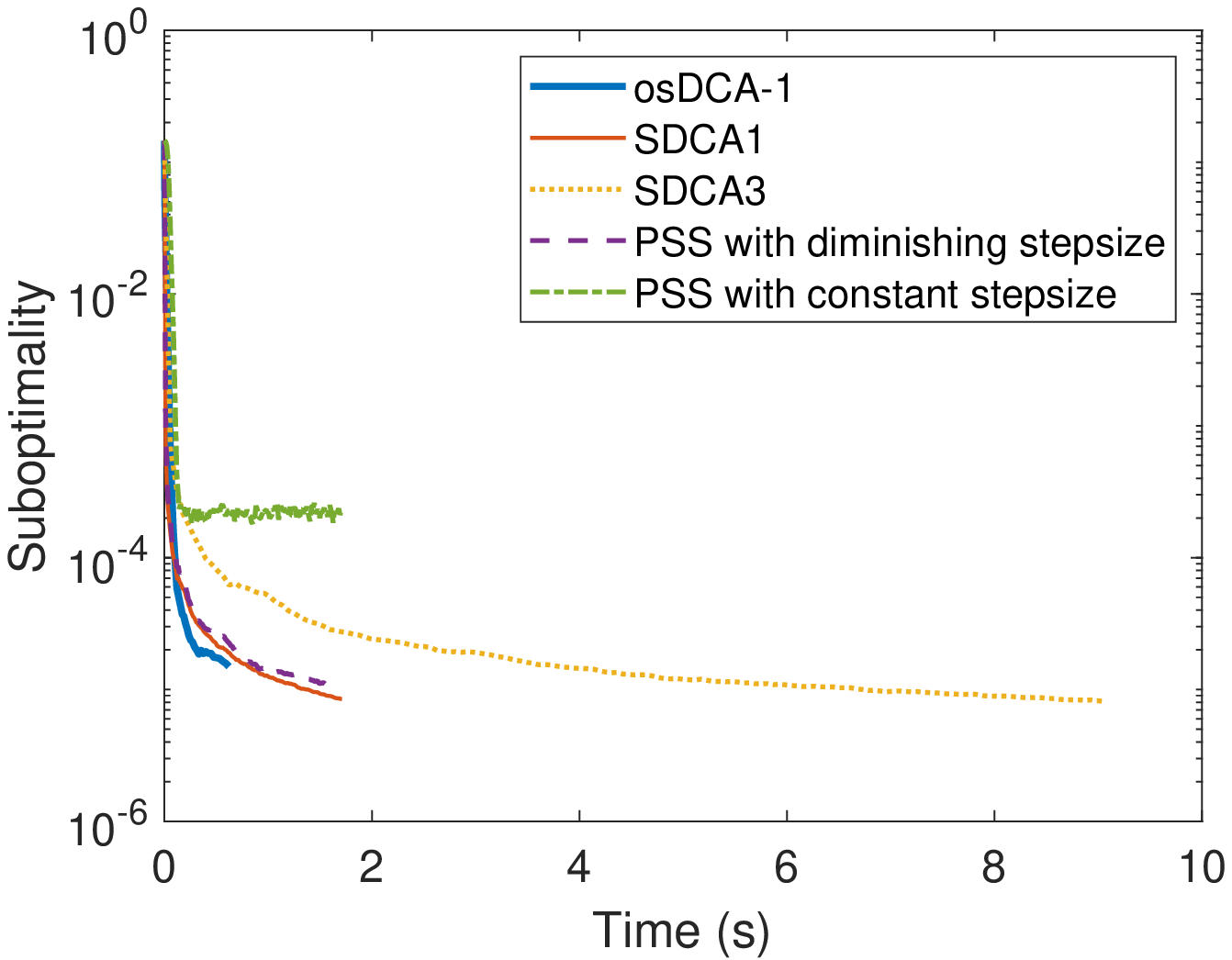} 
	}
	\hspace{-10pt}
	\subfigure[\texttt{shuttle}]{	
	\includegraphics[width=0.233\textwidth]{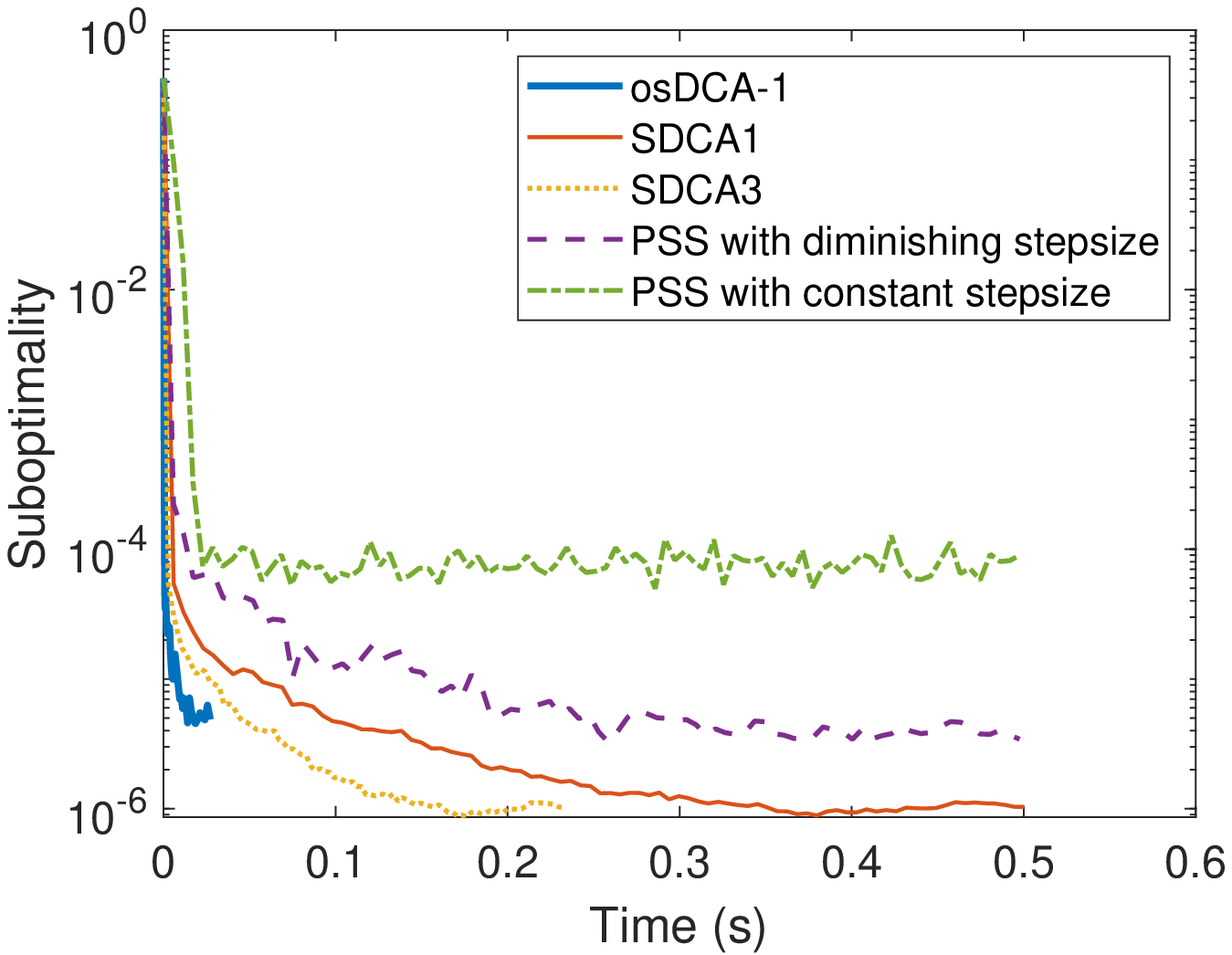} 
	}
	\hspace{-10pt}
	\subfigure[\texttt{letter}]{	
	\includegraphics[width=0.233\textwidth]{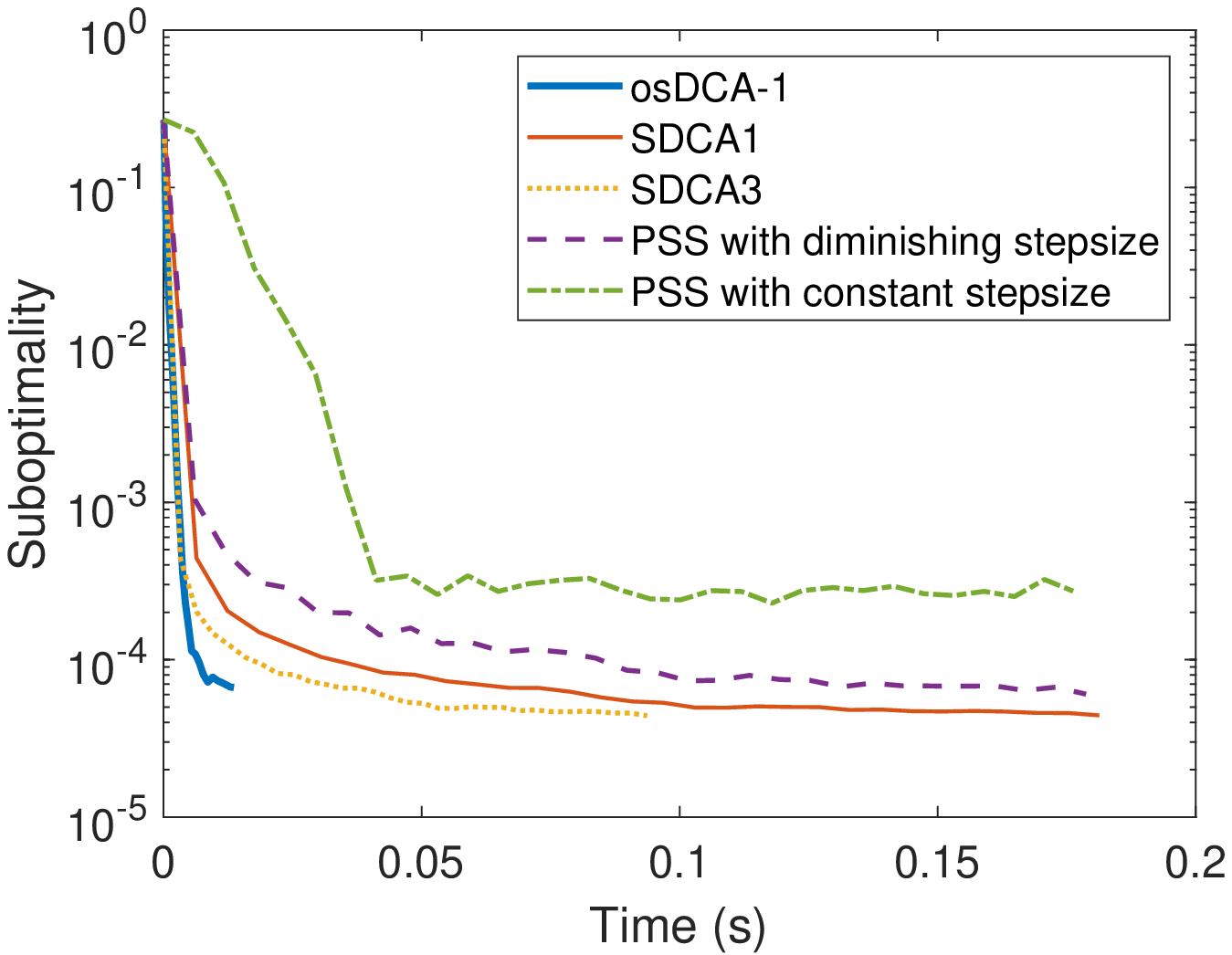} 
	}
	\hspace{-10pt}
	\subfigure[\texttt{YearPredictionMSD}]{	
	\includegraphics[width=0.233\textwidth]{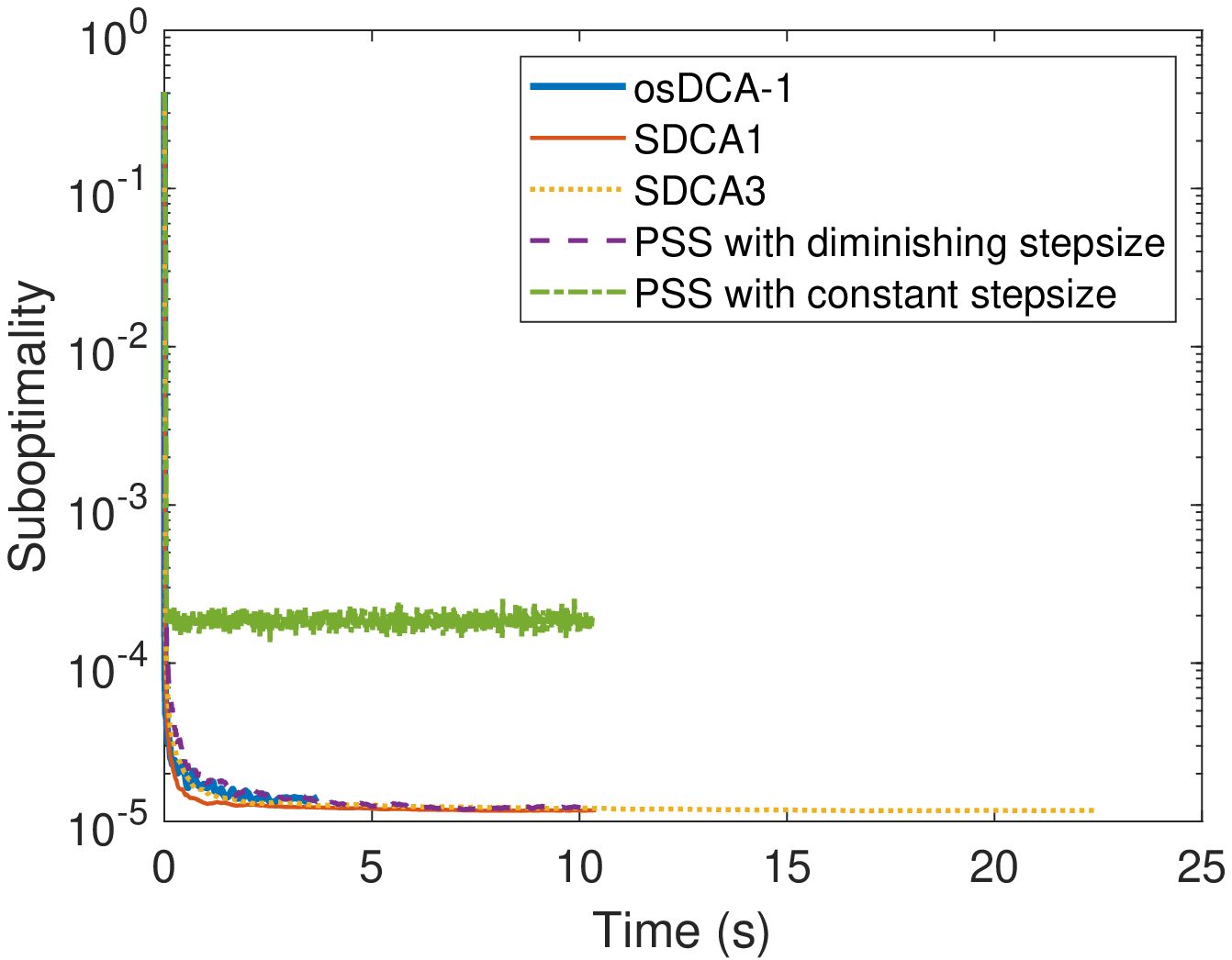} 
	}
     \caption{The performance of osDCA-1 compared with SDCA1, SDCA3 and two versions of PSS.}
     \label{fig1}
\end{figure}

\begin{figure}
     \centering
        \subfigure[\texttt{SensIT Vehicle}]{	
	\includegraphics[width=0.233\textwidth]{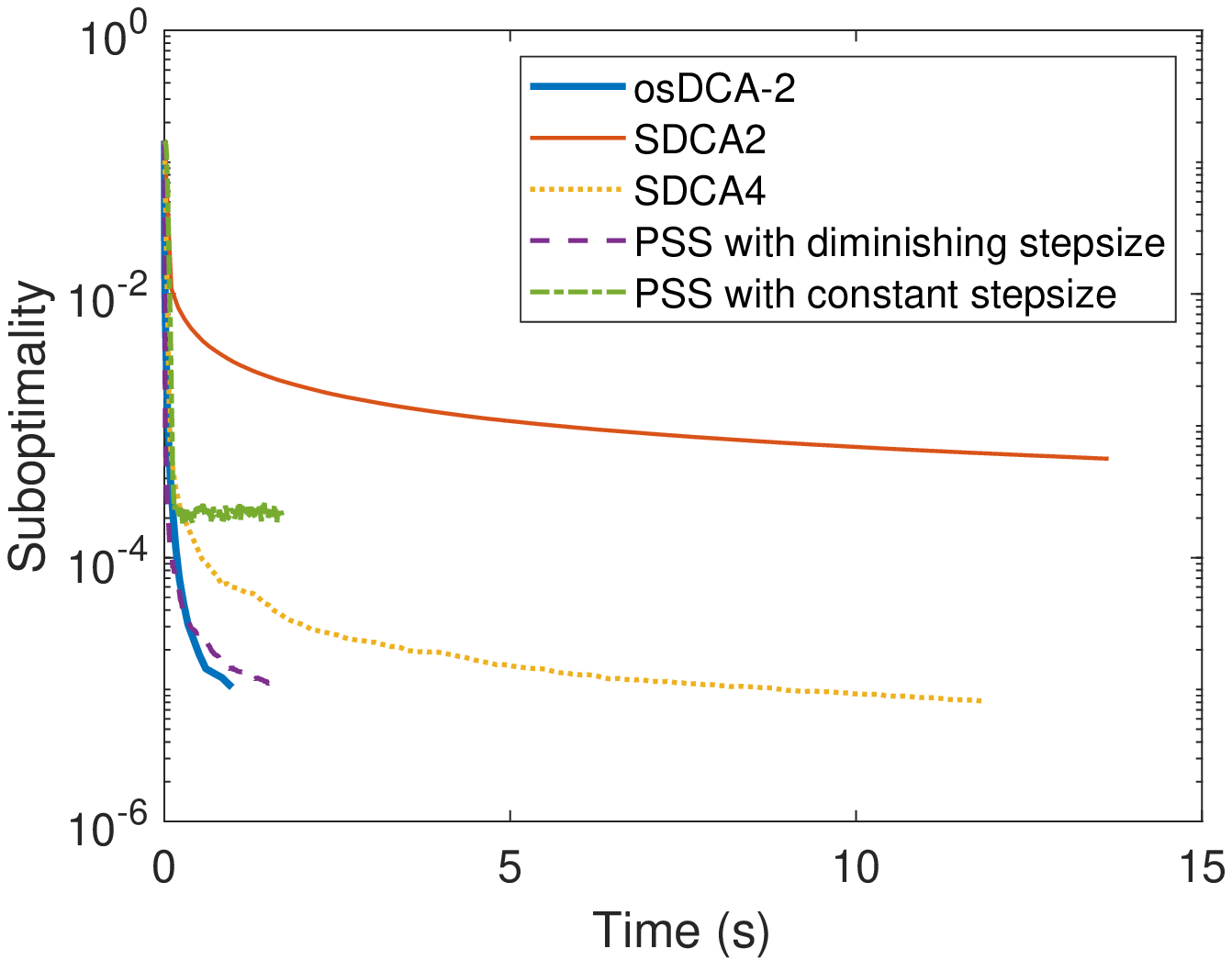} 
	}
	\hspace{-10pt}
	\subfigure[\texttt{shuttle}]{	
	\includegraphics[width=0.233\textwidth]{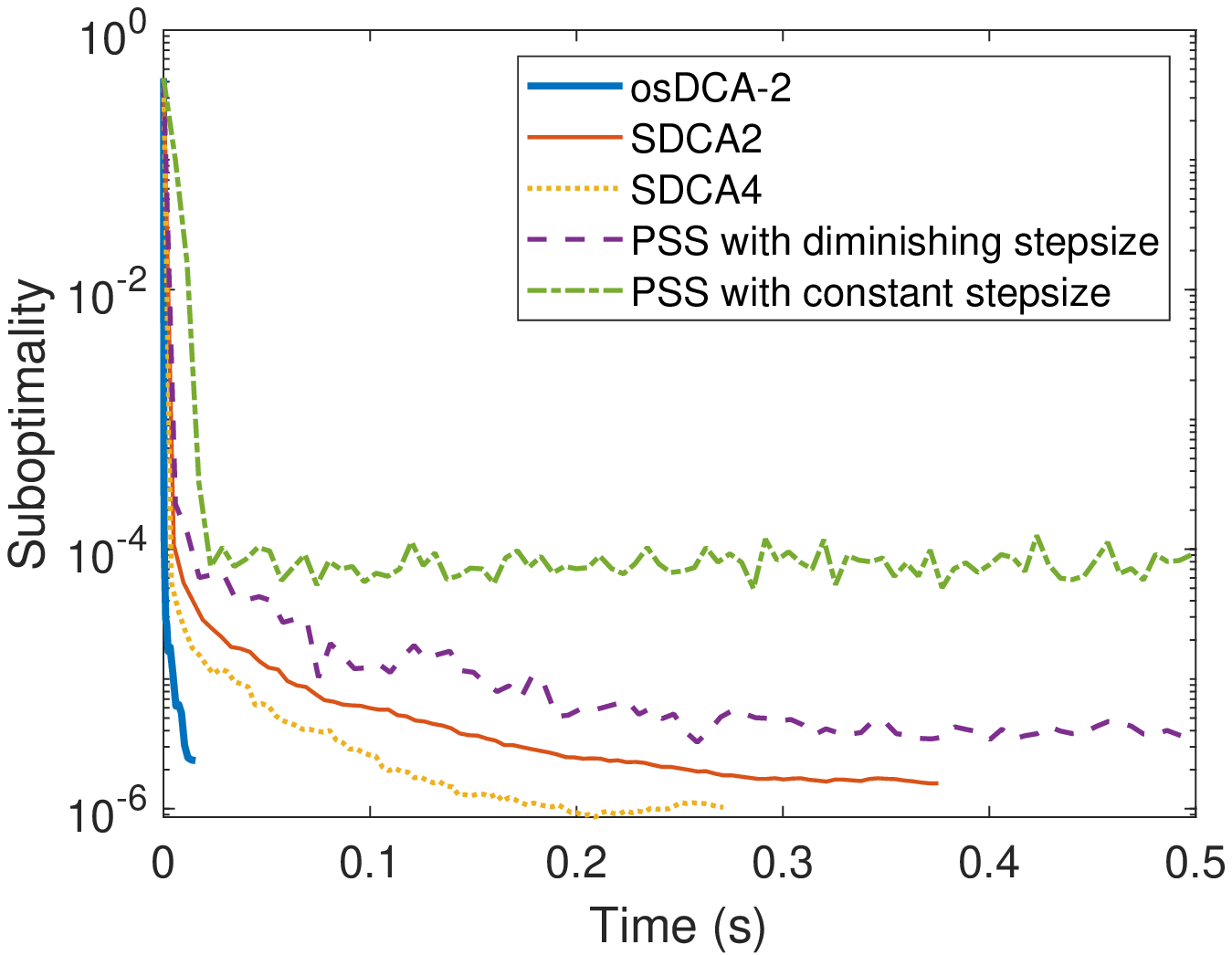} 
	}
	\hspace{-10pt}
	\subfigure[\texttt{letter}]{	
	\includegraphics[width=0.233\textwidth]{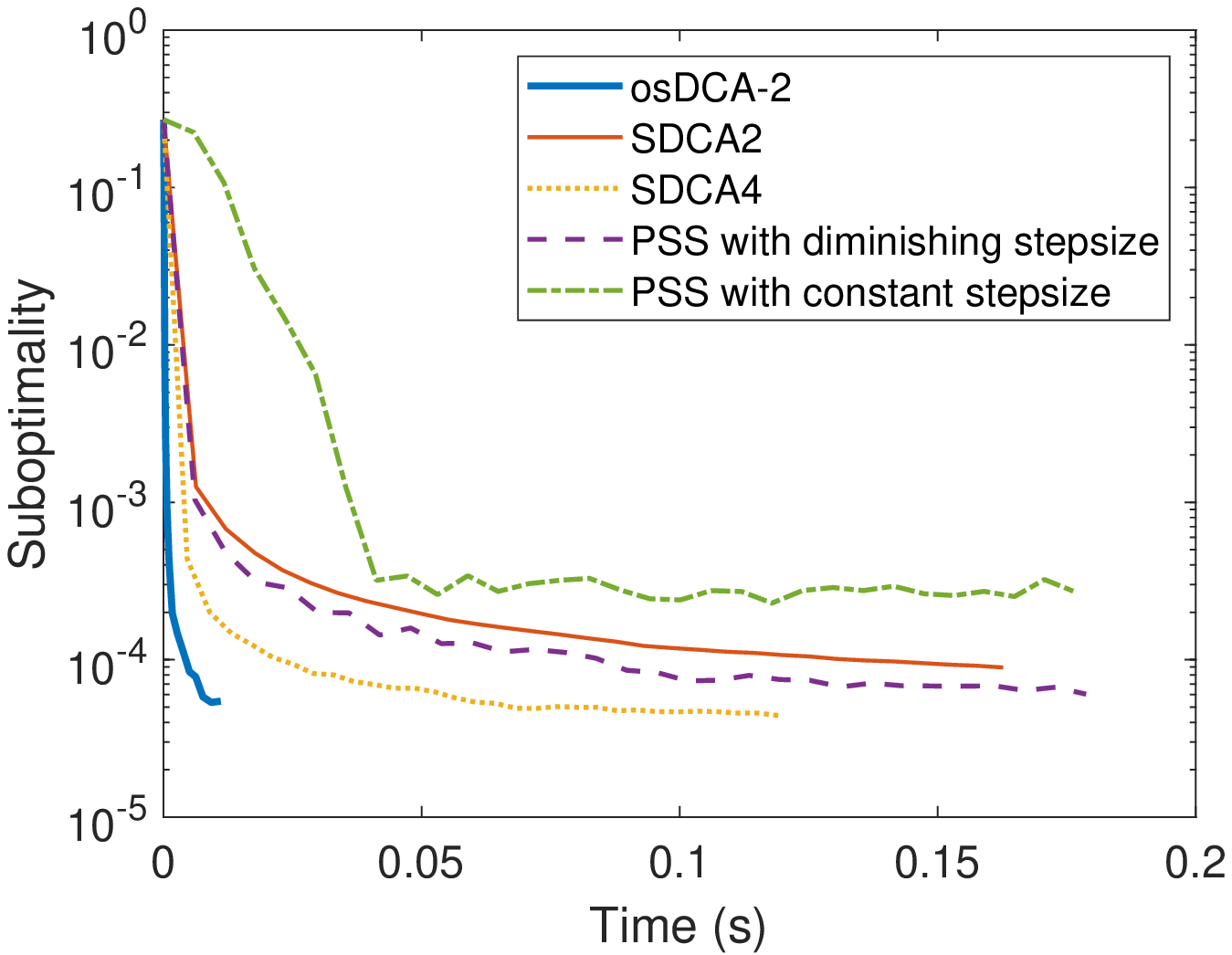} 
	}
	\hspace{-10pt}
	\subfigure[\texttt{YearPredictionMSD}]{	
	\includegraphics[width=0.233\textwidth]{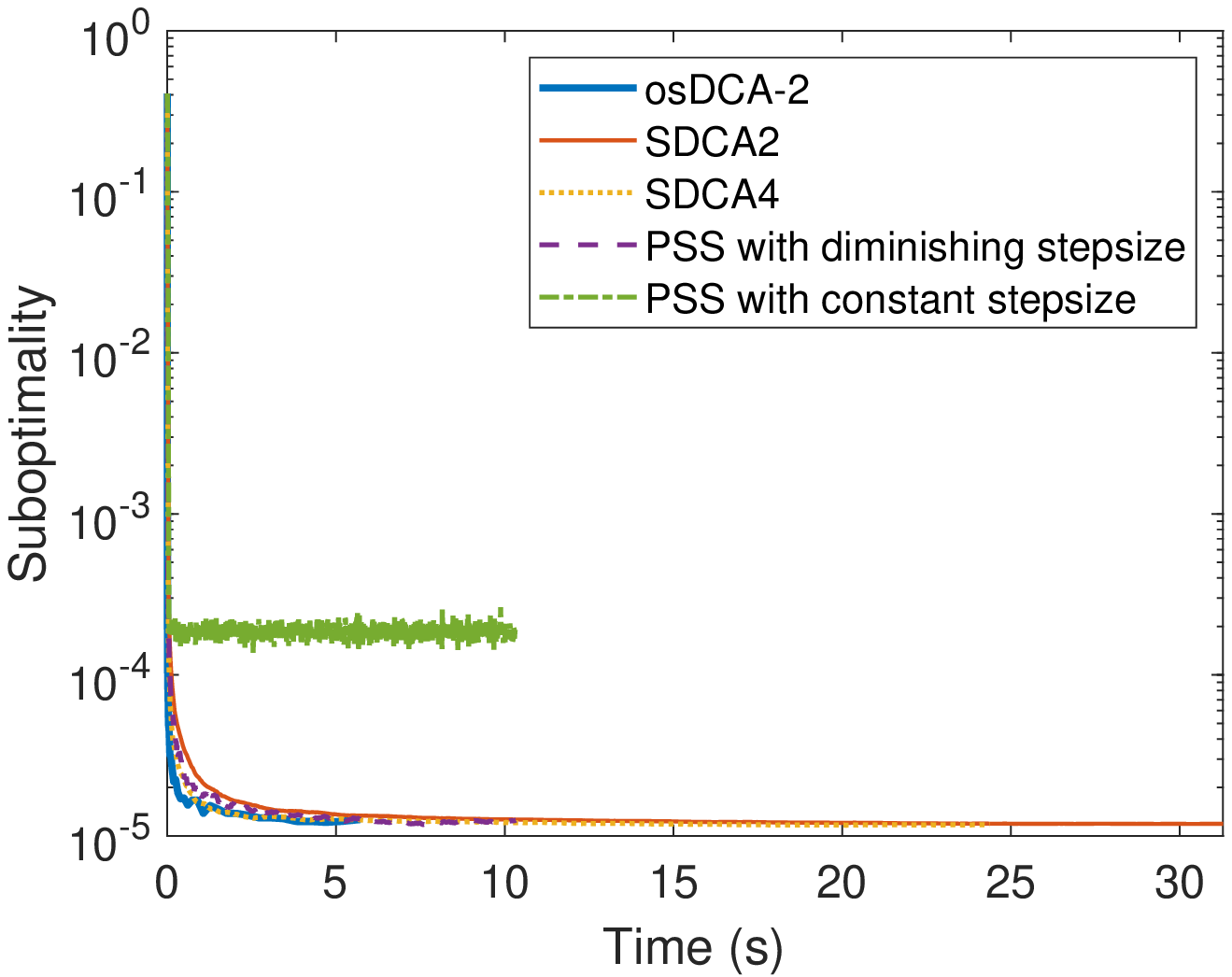} 
	}
     \caption{The performance of osDCA-2 compared with SDCA2, SDCA4 and two versions of PSS.}
     \label{fig1b}
\end{figure}

Furthermore, it is well-known that there are two main factors needed to be carefully considered when designing any DCA (or its variants), namely the DC decomposition of the problem and the convex solver for subproblems. Therefore, we consider the following experiments to study our proposed algorithms' behaviors within these two mentioned perspectives.

In the second experiment, our aim is to study the behavior of osDCA-1 when $\lambda$ varies (change the DC decomposition of the problem). It is observed that, to surely fulfill the strong convexity condition $\rho_G+\rho_H>0$, we add the regularization term $\lambda \Vert \cdot \Vert^2$ to both $G$ and $H$ components. A natural question raised is that: suppose $H$ is already strongly convex, will we obtain some ``optimal" performance if we do not use this regularization term? This curiosity motivates us to perform the osDCA-1 scheme with DC decomposition $g(w,z)=0, h(w,z) = \frac{1}{2} \langle w,z \rangle^2.$ Before presenting the experimental results, let us discuss a little bit about the condition $\rho_H>0$ in this case. We know that this condition does not always hold and it is equivalent to $\mathbb{E}(Z Z^{\top})$ being positive definite. By definition, the positive definiteness of $\mathbb{E}(Z Z^{\top})$ is equivalent to $\mathbb{E}\left((w^{\top} Z)^2 \right) > 0, \forall w \neq 0.$ Therefore, this condition is violated if there exists $w_0 \neq 0$ such that $\mathbb{E}((w_0^{\top}Z)^2)=0$, or equivalently $w_0^T Z = 0$ almost surely. In other words, the condition $\rho_H>0$ does not hold if there is a perfectly linear dependence between features of the random vector $Z$.

 Figure \ref{fig2} shows the behaviors of osDCA-1 with different $\lambda>0$ and an extreme case where $\lambda=0$ on the \texttt{YearPredictionMSD} dataset. We observe that, the optimal performance of osDCA-1 is achieved at some moderate values of $\lambda$, say, from $1$ to $5$. Besides, the quality of the performance is not monotone with respect to $\lambda$. With large value of $\lambda$, osDCA-1 somehow gets stuck at the beginning. The performance of osDCA-1 is gradually improved as $\lambda$ decreases up to a certain value, and then the performance slightly deteriorates as $\lambda$ continues to approach $0$.

\begin{figure}
\centering
\includegraphics[width=0.38\textwidth]{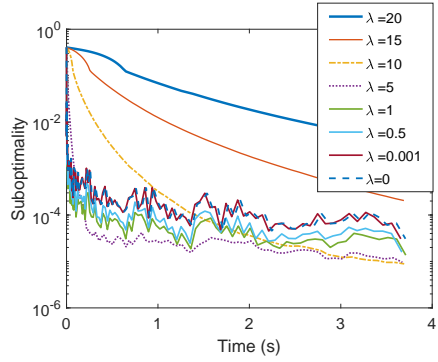}
\caption{Performance (one run) of osDCA-1 when $\lambda>0$ varies and when $\lambda=0$}
\label{fig2}
\end{figure}

In the third experiment, we study the performance of osDCA-2 with different convex solvers for subproblems. To be specific, beside the (deterministic) DCA used in the osDCA-2 scheme, we want to use the industrial CPLEX for solving the convex subproblems. Figure \ref{fig3} shows the difference between osDCA-2 using deterministic DCA and CPLEX for solving convex subproblems. It is observed from the figure that while the suboptimality values of these two algorithms are similar, osDCA-2 using DCA for the convex subproblem is faster than osDCA-2 with CPLEX.

\begin{figure}
     \centering
        \subfigure[\texttt{SensIT Vehicle}]{	
	\includegraphics[width=0.233\textwidth]{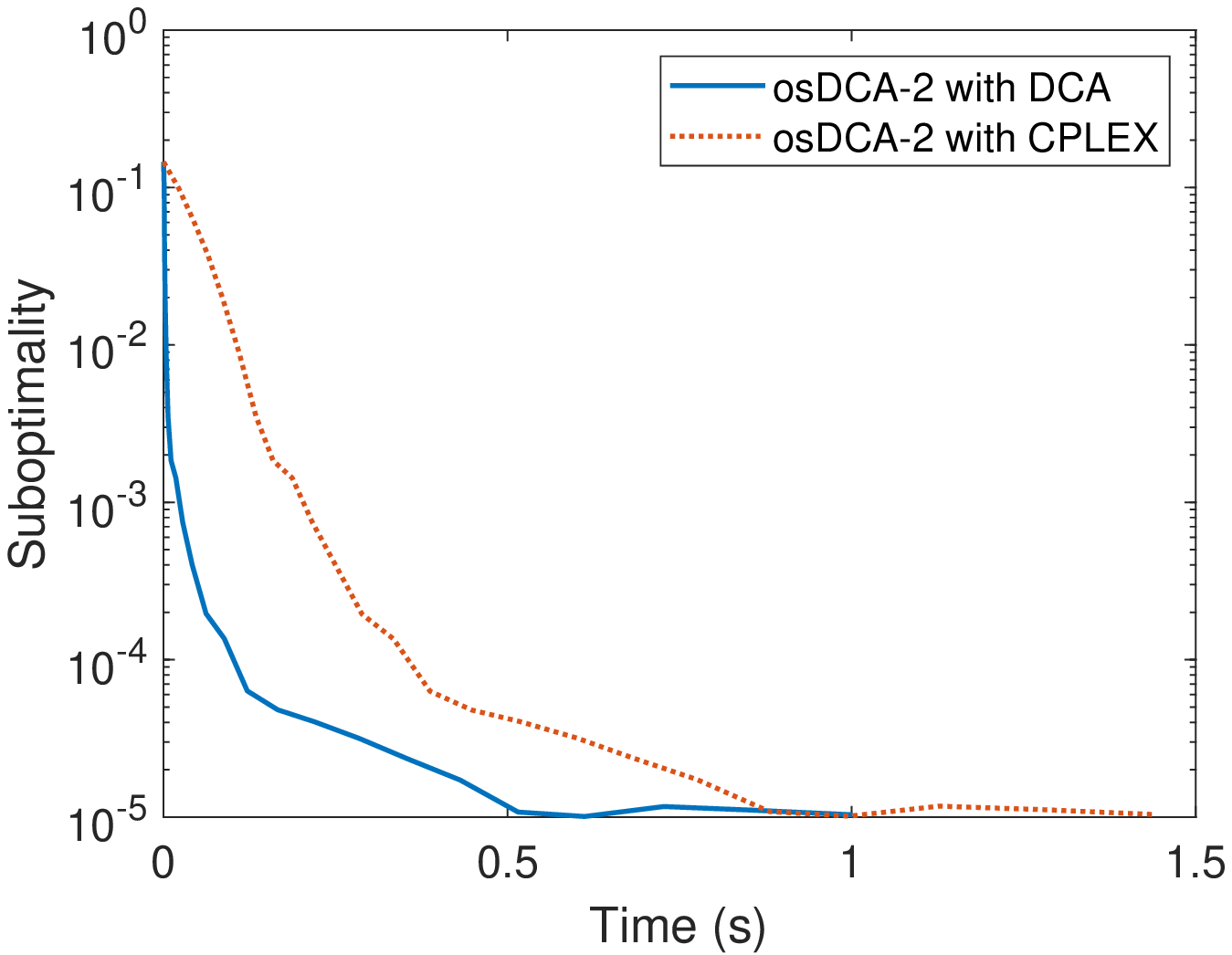} 
	}
	\hspace{-10pt}
	\subfigure[\texttt{shuttle}]{	
	\includegraphics[width=0.233\textwidth]{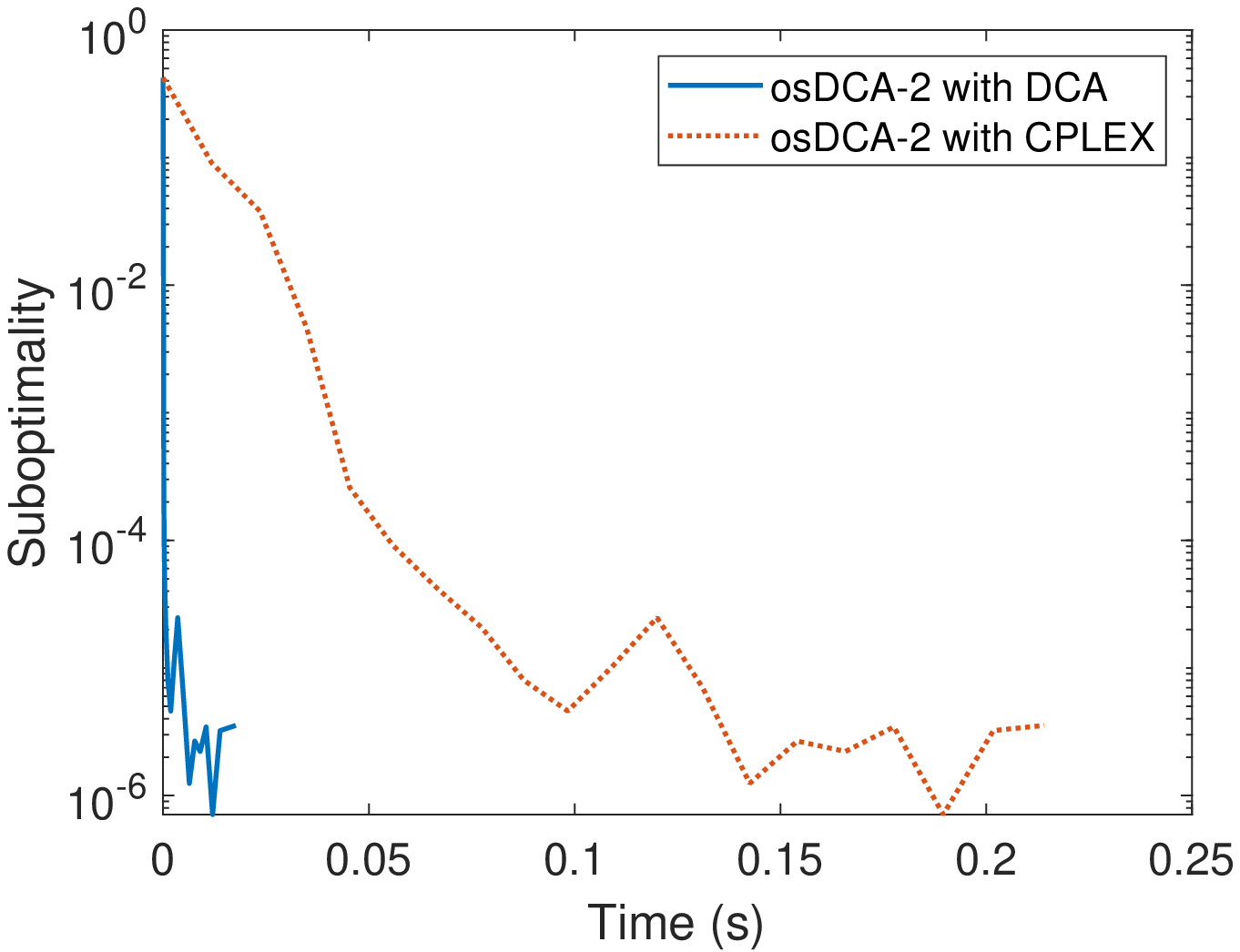} 
	}
	\hspace{-10pt}
	\subfigure[\texttt{letter}]{	
	\includegraphics[width=0.233\textwidth]{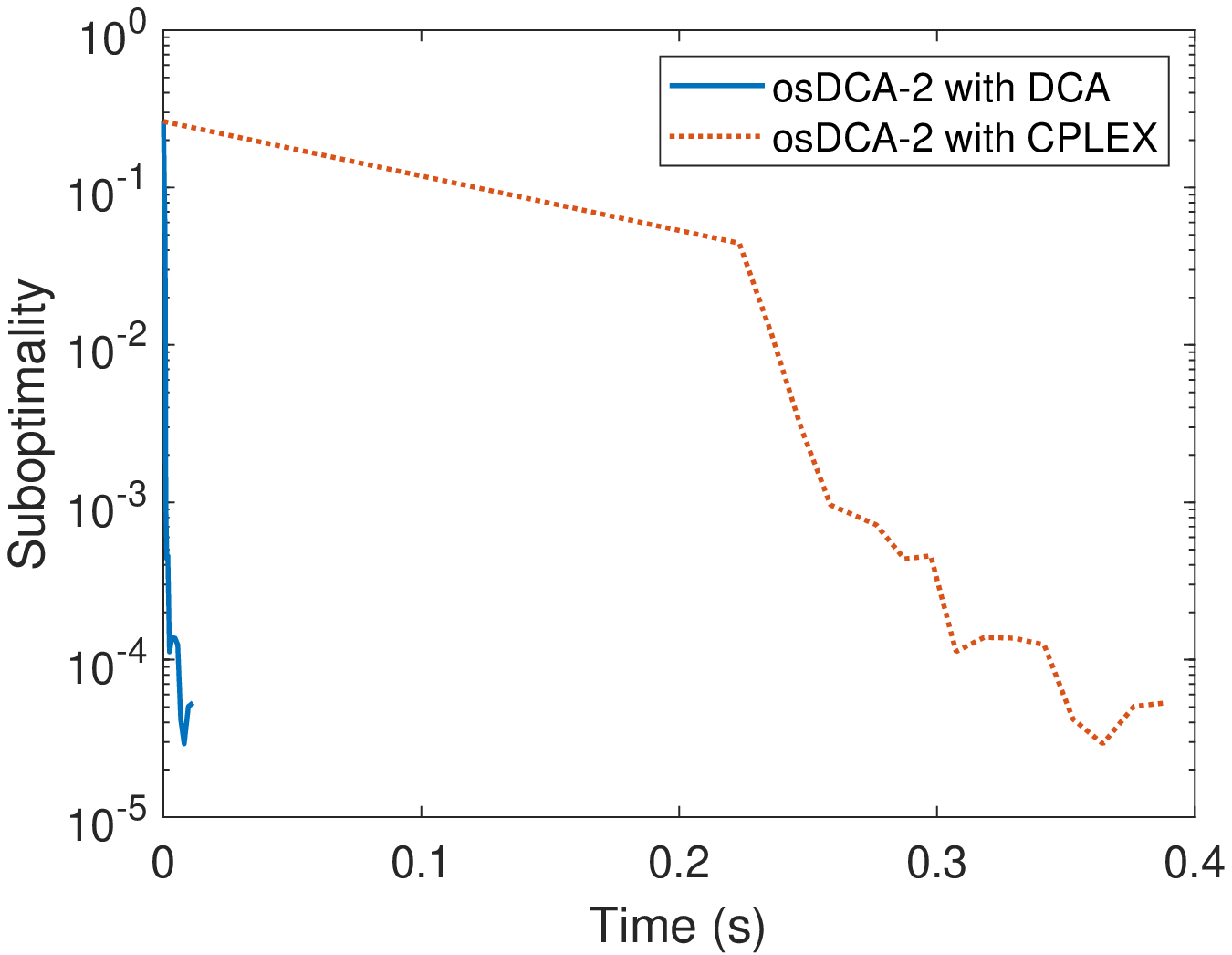} 
	}
	\hspace{-10pt}
	\subfigure[\texttt{YearPredictionMSD}]{	
	\includegraphics[width=0.233\textwidth]{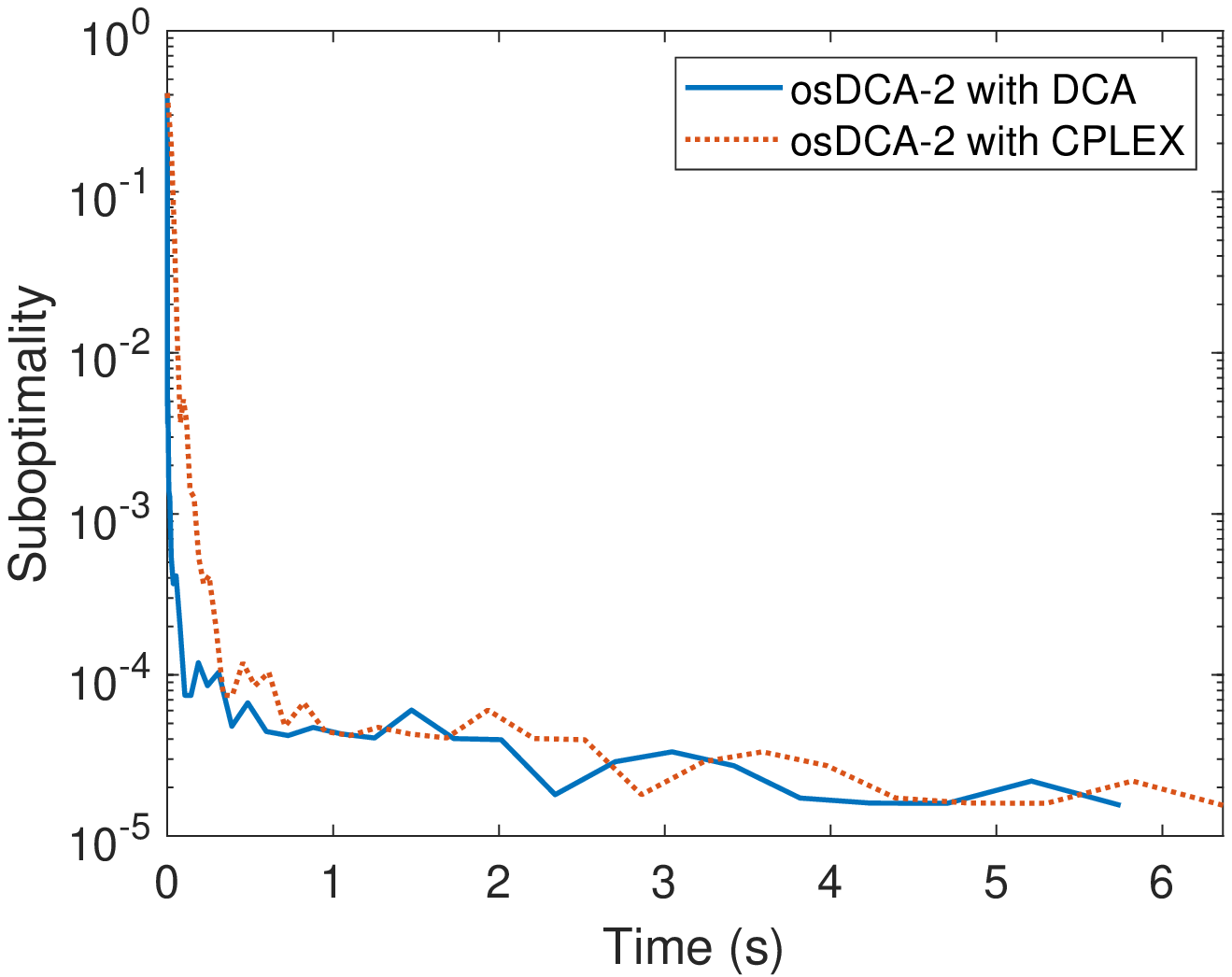} 
	}
     \caption{The performance (one run) of osDCA-2 with two different convex solvers: the DCA and CPLEX}
     \label{fig3}
\end{figure}

In the last experiment, we study the adaptive capacity of osDCA schemes compared with SDCA schemes when there is an abrupt change in the distribution of $Z$. We describe the context of the problem as follows. We are receiving streaming data from an unknown distribution (the data is - in fact - realizations of $Z$).  At a certain time, suppose that there is a real-world event that makes the distribution of $Z$ change ($Z$ becomes some $Z'$). We do not know this event (and hence, the change of $Z$ is also unknown to us) and continue to receive streaming data from the changed distribution. From that time, we want to solve (\ref{eq:ori}) with $Z$ being replaced by $Z'$ since the new random variable $Z'$ is more relevant than $Z$.

To this end, we generate a synthetic dataset as follows. The dataset consists of two subdatasets representing data collected before and after the abrupt change. The first subdataset includes a training set ($200000 \times 500$) and a validation set ($500000 \times 500$) that are generated from multivariate normal distribution with a mean vector $0$ and a positive definite covariance matrix. Then, we change the covariance matrix and generate the second subdataset consisting a training set ($200000 \times 500$) and a validation set ($200000 \times 500$). All data is then normalized as $\Vert z_i \Vert = 1.$ We concatenate two training sets to create one unified training set in order to feed to the algorithms. Before the change, we measure the performance of each algorithm on the first validation set, and after the change, we use the second validation set. Figure \ref{figx} shows the average results of $20$ runs, here we separate the results into two subfigures because the running times of SDCA2, SDCA3, SDCA4 are remarkably longer than osDCA-1, osDCA-2, and SDCA1. The numerical results confirm the adaptive capacity of osDCA schemes over SDCA schemes. Indeed, after the abrupt change, osDCA schemes quickly regain suboptimality values that are as good as the ones obtained before the change. Meanwhile, SDCA schemes barely adapt to the change and decrease the suboptimality slowly.

\begin{figure}
     \centering
        \subfigure{	
	\includegraphics[width=0.35\textwidth]{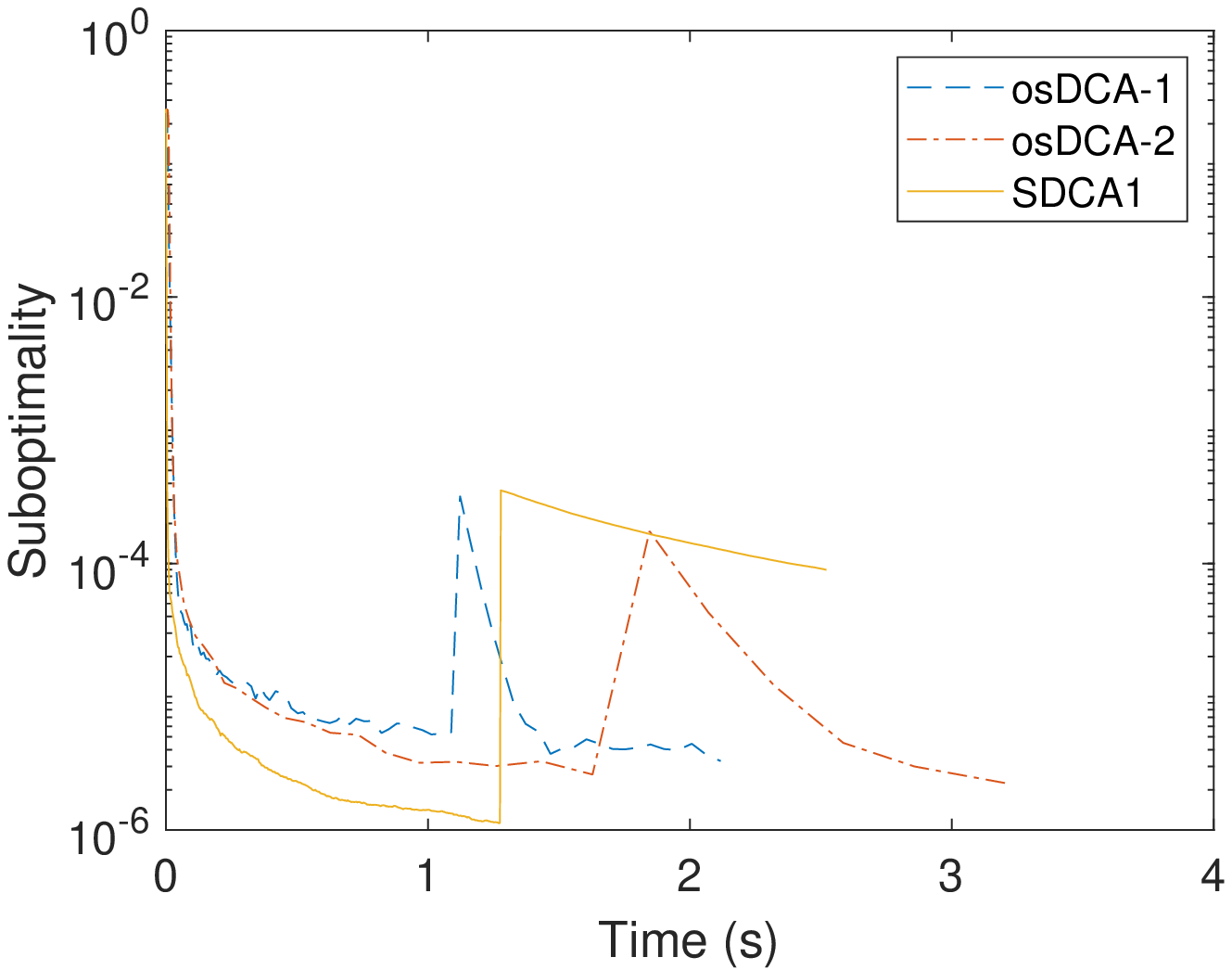} 
	}
	\hspace{-10pt}
	\subfigure{	
	\includegraphics[width=0.35\textwidth]{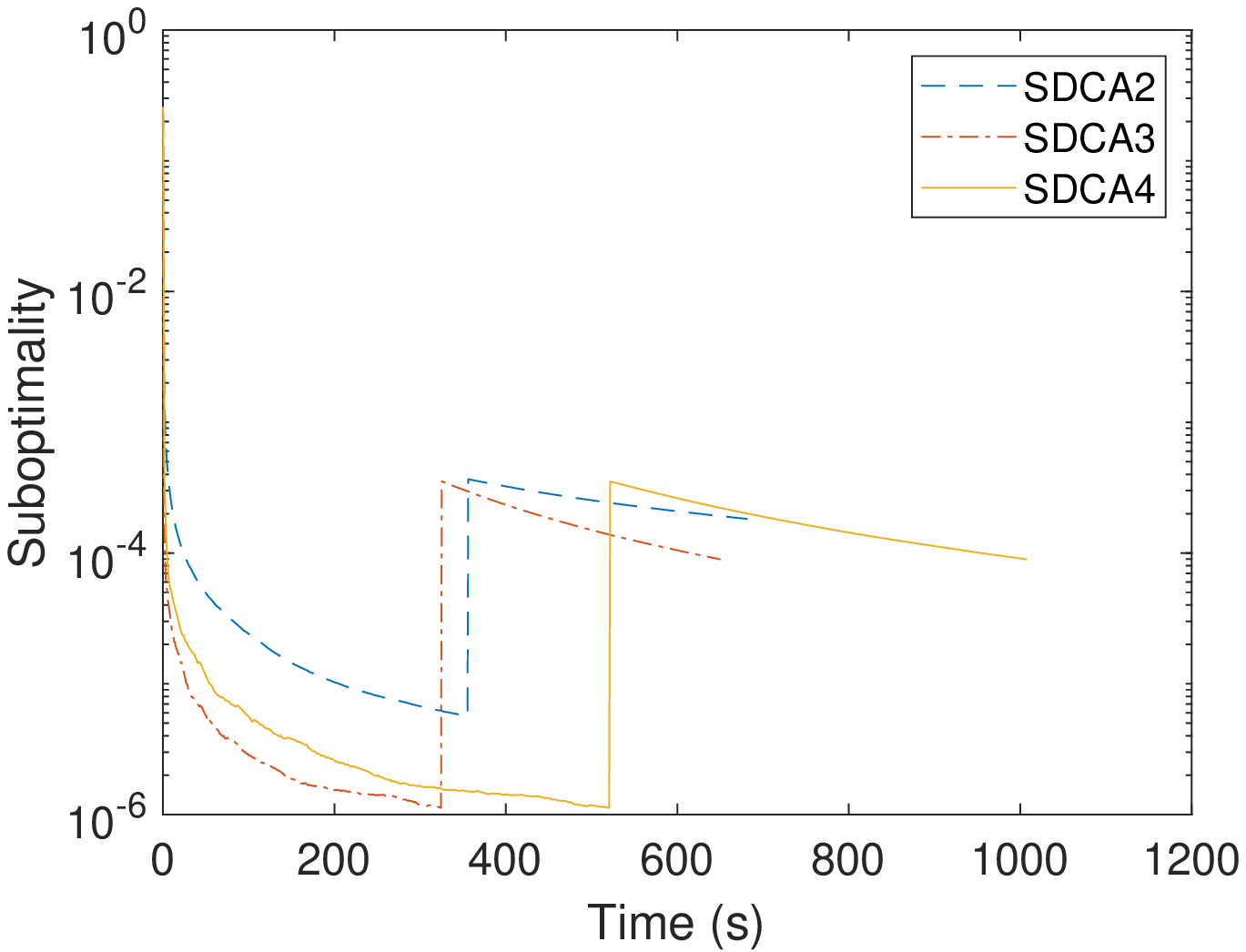} 
	}
     \caption{The adaptive ability of osDCA schemes over SDCA schemes}
     \label{figx}
\end{figure}

\section{Conclusion}
We have designed three online stochastic algorithms based on DCA to handle stochastic nonsmooth, nonconvex DC programs. The first scheme stochastically approximates both DC components; meanwhile, the other two are designed for the context that one of two DC components can be directly computed. The theoretical properties of the proposed algorithms are rigorously studied, and the almost sure convergence to critical points is established. As online stochastic algorithms, the osDCA schemes gain a competitive edge when dealing with streaming data. The benefits of osDCA schemes include remedying storage burden and the ability to adapt to new changes of data distribution. On the other hand, it is well-known that the variance of stochastic estimators of online stochastic algorithms is high, which creates difficulties in the convergence analysis, especially in nonconvex and nonsmooth settings. Our algorithms' convergence results hold thanks to the increase of sample sizes. Moreover, the rate of this increase is determined based on the Rademacher complexity of the family of functions $\{g(\cdot,z): z \in \Xi\}$. Nevertheless, such complexity is not always easy to compute. In future works, we would like to improve this condition and provide a better rate.

On the other hand, to study the practical behaviors of the proposed algorithms, we conduct numerical experiments on the expected problem of PCA. We consider streaming data that comes from an unknown distribution. The numerical experiments justify the proposed algorithms' efficiency. Indeed, the proposed osDCA schemes obtain good solutions within a short time. In addition, the adaptive capacity of osDCA schemes have been confirmed: after a change of the data distribution, our algorithms quickly adapt to the new distribution. As a comparison, SDCA schemes do not have this ability. Further experimental insights confirm the importance of choosing the DC decomposition for the considered problem and the convex solver for subproblems. It has been shown that the (deterministic) DCA is a very efficient and robust convex solver in our experiments.

\bibliographystyle{plain}
\bibliography{osDCA}  






\end{document}